\newtheorem{theorem}{Theorem}[section]
\newtheorem{lemma}[theorem]{Lemma}
\newtheorem{remark}[theorem]{Remark}
\newtheorem{corollary}[theorem]{Corollary}
\newtheorem{definition}[theorem]{Definition}
\newtheorem{proposition}[theorem]{Proposition}
\numberwithin{equation}{section}
\newcommand{\pp}{{\mathbb P}}
\newcommand{\rr}{{\mathbb R}}
\newcommand{\nn}{{\mathbb N}}
\begin{document}
\author[A. Aliouche]{Abdelkrim Aliouche}
\address{A. Aliouche, Department of Mathematics\\
University of Larbi Ben M'Hidi\\
Oum-El Bouaghi, 04000, Algeria}
\email{alioumath@yahoo.fr}
\author[C. Simpson]{Carlos Simpson}
\address{C. Simpson, CNRS, Laboratoire J. A. Dieudonn\'{e}, UMR 6621 \\
Universit\'e de Nice-Sophia Antipolis\\
06108 Nice, Cedex 2, France}
\email{carlos@unice.fr}
\title[Fixed points and lines]{Fixed
points and lines in $2$-metric spaces}
\subjclass[2000]{ Primary 54H25; Secondary 47H10.}
\keywords{2-metric space, D-metric, Fixed point}

\begin{abstract}
We consider bounded $2$-metric spaces satisfying an additional axiom, and show that
a contractive mapping has either a fixed point or a fixed line. 
\end{abstract}

\maketitle

\section{Introduction}
\label{intro}

G\"ahler introduced in the 1960's the notion of $2$-metric space \cite{Gahler1} \cite{Gahler2} \cite{Gahler3},
and several authors have studied the question of fixed point theorems for mappings on such spaces. 
A $2$-metric is a function $d(x,y,z)$ symmetric under permutations, satisfying the {\em
tetrahedral inequality}
\[
d(x,y,z)\leq d(x,y,a)+d(x,a,z)+d(a,y,z)\mbox{  for all  }x,y,z,a\in X.
\]
as well as conditions (Z) and (N) which will be recalled below. In the prototypical example, $d(x,y,z)$ is
the area of the triangle spanned by $x,y,z$. 

This notion has been considered by several authors (see \cite{FreeseCho}), 
who have notably generalized Banach's principle to obtain
fixed point theorems, for example White \cite{White}, Iseki \cite{Iseki}, Rhoades \cite{Rhoades},
Khan \cite{Khan}, Singh, Tiwari and Gupta \cite{SinghTiwariGupta}, Naidu and Prasad \cite{NaiduPrasad}, Naidu \cite{Naidu} and Zhang \cite{LiuZhang}, 
Abd El-Monsef, Abu-Donia, Abd-Rabou \cite{AbdElMonsefEtAl}, Ahmed \cite{Ahmed} and others. 
The contractivity conditions used in these works are usually of the form
\[
d(F(x),F(y),a)\leq \ldots 
\]
for any $a\in X$. We may think of this as meaning that $d(x,y,a)$ is a family of distance-like functions of $x$ and $y$, indexed by $a\in X$.
This interpretation intervenes in our transitivity condition (Trans) below. However, Hsiao has
shown that these kinds of contractivity conditions don't have a wide range of
applications, since they imply colinearity of the sequence of iterates starting
with any point \cite{Hsiao}. We thank B. Rhoades for pointing this out to us. 

There have also been several different notions of a space together with a function of $3$-variables. For example, 
Dhage \cite{Dhage} introduced the
concept of $D$-metric space and proved the existence of a unique fixed point
of a self-mapping satisfying a contractive condition. Dhage's definition uses the 
symmetry and tetrahedral axioms present in G\"ahler's definition, but 
includes the {\em coincidence} axiom that $d(x,y,z)=0$ if and only if $x=y=z$. 

A sequence $\{x_{n}\}$ in a $D$-metric space $(X,d)$ is said by Dhage to be
convergent to an element $x\in X$ \ (or $d$-convergent) \cite{Dhage} if
given $\epsilon >0$, there exists an $N\in \nn$
such that $d(x_{m},x_{n},x)<\epsilon $ for all $m,n\geq N$.
He calls a sequence $\{x_{n}\}$ in a $D$-metric space $(X,d)$ Cauchy
(or $d$-Cauchy) \cite{Dhage} if given $\epsilon >0$, there exists an $N\in \nn$
such that $d(x_{n},x_{m},x_{p})<\epsilon $ for all $n,m,p\geq N$.

These definitions, distinct from those used by G\"ahler et al,
motivate the definition of the property 
$LIM(y,(x_i))$ in Definition \ref{def-lim} and studied
in Theorem \ref{possibilities} below.

The question of fixed-point theorems on such spaces has proven to be somewhat delicate
\cite{MustafaSims1}. 
Mustafa and Sims introduced a notion of $G$-metric space \cite{MustafaSims} \cite{MustafaSims2}, 
in which the tetrahedral inequality is replaced by an inequality involving repetition of indices. 
In their point of view the function $d(x,y,z)$ is thought of as representing the perimeter of a triangle. 

The question of fixed points for mappings on $G$-metric spaces has been considered by
Abbas-Rhoades \cite{AbbasRhoades}, Mustafa and co-authors \cite{MustafaObiedatAwawdeh}, 
\cite{MustafaShatawaniBataineh}. This is not an exhaustive description of the large literature on this subject. 

In the present paper, we return to the notion of $2$-metric space.
The basic philosophy is that since a $2$-metric measures area, a contraction, that is a map $F$ such that
\begin{equation}
\label{tc}
d(F(x),F(y),F(z)) \leq kd(x,y,z)
\end{equation}
for some $k<1$, should
send the space towards a configuration of zero area, which is to say a line. 
Results in this direction, mainly 
in the
case of the euclidean triangle area, have been obtained 
by a small circle of authors
starting with Zamfirescu \cite{Zamfirescu} and
Daykin and Dugdale \cite{DaykinDugdale}, who called such mappings
``triangle-contractive''. Notice that 
the triangle-contractive condition is different from the ones discussed
previously, in that $F$ is applied to all three variables.

Zamfirescu, Daykin and Dugdale obtained results saying that the set
of limit points of iterates of such maps are linear, giving
under some hypotheses either fixed points or fixed lines. 
Subsequent papers in this direction include Rhoades \cite{RhoadesFixtures},
Ang-Hoa \cite{AngHoa1} \cite{AngHoa2}, 
Dezs\"{o}-Mure\c{s}an \cite{DezsoMuresan},  
and Kapoor-Mathur \cite{KapoorMathur}. The paper of 
Dezs\"{o} and Mure\c{s}an envisions the extension of the theory to the
case of $2$-metric spaces, but most of their results concern the euclidean case or the case of a $2$-normed linear space. 

In order to obtain a treatment which applies to more general
$2$-metric spaces, yet 
always assuming that $d$ is globally bounded (B), we add an additional quadratic axiom (Trans) to the original definition of $2$-metric. 
Roughly speaking this axiom says that if $x,y,z$ are approximately
colinear, if $y,z,w$ are approximately colinear, and if $y$ and $z$
are far enough apart, then $x,z,w$ and $x,y,w$ are approximately colinear. 
The axiom (Trans) will be shown to hold
in the example $X= S^2$ where $d(x,y,z)$ is given by a determinant (Section \ref{example}), which 
has appeared in \cite{MiczkoPalczewski}, as well as for the standard area $2$-metric
on ${\mathbb{R}}^n$.
The abbreviation comes from the
fact that (Trans) implies transitivity of the relation of colinearity,
see Lemma \ref{coltrans}.  
This axiom allows us to consider a notion of {\em fixed line} of a mapping $F$ which 
is contractive in the sense of \eqref{tc}.  
With these hypotheses on $d$ and
under appropriate compactness assumptions we prove that such a mapping has either a fixed point or a fixed line.

In the contractivity condition \eqref{tc}, the function $F$
is applied to all three variables. Consequently, it turns out that 
there exist many mappings satisfying our contractivity condition,
but not the triviality observed by Hsiao \cite{Hsiao}. 
Some examples will be discussed in
Section \ref{examplemaps}. 
In the example of $S^2$ with the norm of determinant $2$-metric, one can take
a neighborhood of the equator which contracts towards the equator, composed with a 
rotation. This will have the equator as fixed line, but no fixed point, and
the successive iterates of a given point will not generally be colinear. 
Interesting examples of $2$-metrics on manifolds are obtained from embedding in 
${\mathbb{R}}^n$ and pulling back the standard area $2$-metric. The properties
in a local coordinate chart depend in some way on the curvature of the embedded
submanifold. We consider a first case of patches on $S^2$ in the euclidean
${\mathbb{R}}^3$. These satisfy an estimate (Lemma \ref{s2bounds}) which
allows to exhibit a ``flabby'' family of contractible mappings depending
on functional parameters (Proposition \ref{s2maps}). This shows that 
in a strong sense the objection of \cite{Hsiao} doesn't apply.

The first section of the paper considers usual metric-like functions of two
variables, pointing out that the classical triangle inequality may be weakened
in various ways. A bounded $2$-metric leads naturally to such a distance-like function
$\varphi (x,y)$ but we also take the opportunity to sketch some directions
for fixed point results in this general context, undoubtedly in the same direction
as \cite{Peppo} but more elementary.

As a small motivation to readers more oriented towards abstract category theory, 
we would like to point out that a metric space (in the classical sense) may be considered
as an enriched category: the ordered set $({\mathbf{R}}_{\geq 0}, \leq )$
considered as a category has a monoidal structure $+$, and a metric space is just an
$({\mathbf{R}}_{\geq 0}, \leq ,+)$-enriched category. We have learned this observation from
Leinster and Willerton
\cite{LeinsterWillerton} although it was certainly known before. An interesting
question is, what categorical structure corresponds to the notion of $2$-metric?

\section{Asymmetric triangle inequality}
\label{asymmtriangle}

Suppose $X$ is a set together with a function $\varphi (x,y)$ 
defined for $x,y\in X$ such that: 
\newline
(R)---$\varphi (x,x)= 0$; \newline
(S)---$\varphi (x,y)= \varphi (y,x)$; \newline
(AT)---for a constant $C\geq 1$ saying
\begin{equation*}
\varphi (x,y)\leq \varphi (x,z)+ C\varphi (z,y).
\end{equation*}
In this case we say that $(X,\varphi )$ satisfies the {\em asymmetric triangle inequality}.

It follows that $\varphi (x,y)\geq 0$ for all $x,y\in X$. Furthermore, if we
introduce a relation $x\sim y$ when $\varphi (x,y)=0$, then the three axioms
imply that this is an equivalence relation, and furthermore when $x\sim
x^{\prime}$ and $y\sim y^{\prime}$ then $\varphi (x,y)=\varphi
(x^{\prime},y^{\prime})$. Thus, $\varphi$ descends to a function on the
quotient $X/\sim $ and on the quotient it has the property that 
$\varphi (\overline{x},\overline{y})=0\Leftrightarrow \overline{x}=\overline{y}$. In
view of this discussion it is sometimes reasonable to add the {\em strict reflexivity}
axiom \newline
(SR)---if $\varphi (x,y)=0$ then $x=y$.

B. Rhoades pointed out to us that
the asymmetric triangle inequality implies the property $\varphi (x,y)\leq \gamma 
(\varphi (x,z)+\varphi (z,y))$ of a {\em quasidistance} used by Peppo \cite{Peppo} 
and it seems
likely that the following discussion could be a consequence of her fixed point
result for $(\varphi , i,j,k)$-mappings, although that deduction 
doesn't seem immediate.

It is easy to see for $(X,\varphi )$ satisfying the asymmetric triangle inequality,
that the notion of limit for the distance function $\varphi $ makes sense,
similarly the notion of Cauchy sequence for $\varphi $ makes sense, and we
can say that $(X,\varphi )$ is \emph{complete} if every Cauchy sequence has
a limit.  If a
sequence has a limit then it is Cauchy. The function $\varphi $ is
continuous, i.e., it transforms limits into limits. 
If furthermore the strictness axiom (SR) satisfied, then 
limit is unique.

A {\em point of accumulation}
of a sequence $(x_i)_{i\in \nn}$ is a limit of a subsequence, 
that is to say a point $y$ such that there exists a subsequence 
$(x_{i(j)})_{j\in \nn}$ with $i(j)$ increasing, such that $y=\lim _{j\rightarrow \infty}x_{i(j)}$. A set $X$ provided
with a distance function satisfying the asymmetric triangle inequality (i.e. (R), (S) and (AT)), is {\em compact}
if every sequence has a point of accumulation. In other words, every sequence
admits a convergent subsequence.  This notion should perhaps be called ``sequentially compact'' 
but it is the only compactness notion which will be used in what follows. 

\begin{lemma}
\label{varphicontractingmap} Suppose $(X,\varphi )$ satisfies
the asymmetric triangle inequality with the constant $C$. Suppose 
$F:X\rightarrow X$ is a map such that 
$\varphi (Fx,Fy)\leq k\varphi (x,y)$ with $k<(1/C)$. Then for any $x\in X$
the sequence $\{F^{i}(x)\}$ is Cauchy. If $(X,\varphi )$ is complete and strictly reflexive 
then its limit is the unique fixed point of $F$.
\end{lemma}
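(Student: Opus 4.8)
The plan is to follow the classical Banach contraction argument, adapted to accommodate the constant $C$ appearing in the asymmetric triangle inequality (AT). Fix $x\in X$ and write $x_i := F^i(x)$. The first step is the elementary observation that the contraction hypothesis applied along consecutive iterates gives $\varphi(x_{i+1},x_{i+2}) = \varphi(Fx_i, Fx_{i+1}) \leq k\,\varphi(x_i, x_{i+1})$, so that by induction $\varphi(x_i, x_{i+1}) \leq k^i\,\varphi(x_0, x_1)$.

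To prove the Cauchy property, I would estimate $\varphi(x_m, x_n)$ for $m < n$ by telescoping with (AT). Peeling off one term at a time from the front gives $\varphi(x_m, x_n) \leq \varphi(x_m, x_{m+1}) + C\varphi(x_{m+1}, x_n)$, and iterating this produces $\varphi(x_m, x_n) \leq \sum_{j=0}^{n-m-1} C^j \varphi(x_{m+j}, x_{m+j+1})$. Here the crucial point --- and the one place where the hypothesis $k < 1/C$ is essential rather than merely $k<1$ --- is that each application of (AT) attaches an extra factor of $C$ to the tail. Combining with the consecutive-term bound yields $\varphi(x_m, x_n) \leq k^m \varphi(x_0, x_1) \sum_{j=0}^{\infty} (Ck)^j = \frac{k^m}{1-Ck}\,\varphi(x_0, x_1)$, where the geometric series converges precisely because $Ck < 1$. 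Since $k^m \to 0$ and the bound depends, using symmetry (S), only on $\min(m,n)$, the sequence $\{x_i\}$ is Cauchy.

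For the second assertion I would assume completeness and strict reflexivity (SR). Completeness yields a limit $y = \lim_i x_i$. The contraction hypothesis makes $F$ continuous: if $\varphi(x_i, y) \to 0$ then $\varphi(Fx_i, Fy) \leq k\,\varphi(x_i, y) \to 0$. Applying this to the shifted sequence, $Fy = \lim_i Fx_i = \lim_i x_{i+1} = y$, where uniqueness of the limit under (SR) identifies the two values; thus $y$ is a fixed point. Uniqueness is then immediate: if $y$ and $y'$ are both fixed, then $\varphi(y,y') = \varphi(Fy, Fy') \leq k\,\varphi(y,y')$, and since $k < 1/C \leq 1$ this forces $\varphi(y,y') = 0$, whence $y = y'$ by (SR).

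I expect the only genuine obstacle to be the bookkeeping of the constant $C$ in the telescoping estimate; everything else is a routine transcription of the standard argument. In particular one must resist symmetrizing (AT) prematurely, since the asymmetric placement of $C$ on only the second summand is exactly what makes the powers $C^j$ (rather than something larger) appear, keeping the relevant ratio equal to $Ck$ and the series summable.
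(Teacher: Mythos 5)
Your proposal is correct and follows essentially the same route as the paper: the same induction giving $\varphi(x_i,x_{i+1})\leq k^i\varphi(x_0,x_1)$, the same front-peeling telescoping of (AT) producing the factors $C^j$, the same geometric series bound $\frac{k^m}{1-Ck}\varphi(x_0,x_1)$, and the same continuity-of-$F$ argument (plus uniqueness of limits under (SR)) to identify the limit as the unique fixed point. If anything, your write-up is slightly more careful than the paper's at the final step, where the paper's notation for the limit point is inconsistent.
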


\begin{proof}
Let $x_{0}$ be an arbitrary point in $X$ and $\{x_{n}\}$ the sequence
defined by $x_{n+1}=F(x_{n})=F^{n}(x_{0})$ for all positive integer $n$. We
have 
\begin{equation*}
\varphi (x_{n+1},x_{n})=\varphi (Fx_{n},Fx_{n-1})\leq k\varphi
(x_{n},x_{n-1}).
\end{equation*}
By induction, we obtain 
\begin{equation*}
\varphi (x_{n+1},x_{n})\leq k^{n}\varphi (x_{0},x_{1})
\end{equation*}
Using the \emph{asymmetric triangle inequality} several times we get for all
positive integers $n,m$ such that $m>n$ 
\begin{equation*}
\varphi (x_{n},x_{m})\leq \varphi (x_{n},x_{n+1})+C\varphi
(x_{n+1},x_{n+2})+C^{2}\varphi (x_{n+2},x_{n+3})+...
\end{equation*}
\begin{equation*}
...+C^{m-n-1}\varphi (x_{m-1},x_{m}).
\end{equation*}
Then 
\begin{equation*}
\varphi (x_{n},x_{m})\leq \ k^{n}\varphi (x_{0},x_{1})+Ck^{n+1}\varphi
(x_{0},x_{1})+C^{2}k^{n+2}\varphi (x_{0},x_{1})+
\end{equation*}
\begin{equation*}
...+C^{m-n-1}k^{m-1}\varphi (x_{0},x_{1}).
\end{equation*}
Therefore 
\begin{equation*}
\varphi (x_{n},x_{m})\leq \
(1+Ck+C^{2}k^{2}+....+C^{m-n-1}k^{m-n-1})k^{n}\varphi (x_{0},x_{1})
\end{equation*}
and so 
\begin{equation*}
\varphi (x_{n},x_{m})<\frac{k^{n}}{1-Ck}\varphi (x_{0},x_{1})
\end{equation*}
Hence, the sequence $\{x_{n}\}$ is Cauchy. Since $(X,\varphi )$ is complete,
it converges to some $x\in X$. Now, we show that $z$ is a fixed point of $F$. 
Suppose not. Then 
\begin{equation*}
\varphi (Fz,Fx_{n})\leq k\varphi (z,x_{n-1})
\end{equation*}
As $n$ tends to infinity we get $z=Fz$ using (SR). The uniqueness of $z$ follows easily.
\end{proof}

\begin{corollary}
\label{vcmcor} Suppose $(X,\varphi )$ satisfies the asymmetric triangle
inequality, is strictly reflexive and  complete. If $F:X\rightarrow X$ is a map such
that $\varphi (Fx,Fy)\leq k\varphi (x,y)$ with $k<1$, then $F$ has a unique
fixed point.
\end{corollary}

\begin{proof}
Since $k<1$ there exists $a_0\geq 1$ such that $k^a<(1/C)$ for any $a\geq
a_0 $. Then the previous lemma applies to $F^a$ whenever $a\geq a=0$, and 
$F^a$ has a unique fixed point $z_a$. Choose $b\geq a_0$ and let $z_b$ be the
unique fixed point of $F^b$. Then 
\begin{equation*}
F^{ab}(z_b) = (F^b)^a(z_b)=z_b,
\end{equation*}
but also 
\begin{equation*}
F^{ab}(z_a) = (F^a)^b(z_a)=z_a.
\end{equation*}
Thus $z_a$ and $z_b$ are both fixed points of $F^{ab}$; as $ab\geq a_0$ its
fixed point is unique so $z_a=z_b$. Apply this with $b=a+1$, so 
\begin{equation*}
F(z_a)= F(F^a (z_a)) = F^b(z_a)=F^b(z_b)=z_b = z_a.
\end{equation*}
Thus $z_a$ is a fixed point of $F$. If $z$ is another fixed point of $F$
then it is also a fixed point of $F^a$ so $z=z_a$; this proves uniqueness.
\end{proof}

\subsection{Triangle inequality with cost}
\label{tricost}

If $d(x,y,z)$ is a function of three variables, the ``triangle inequality with cost'' is
\begin{equation}
\label{tic}
\varphi (x,y)\leq \varphi (x,z)+\varphi (z,y)+d(x,y,z).
\end{equation}
This enters into Lemma \ref{trianglevariants} below. 

We mention in passing 
a ``triangle inequality with multiplicative
cost'': suppose given a function $\varphi (x,y)$ plus a function of $3$ variables 
$\psi (x,y,z)$ such that 
\begin{equation}
\label{timc}
\varphi (x,y)\leq (\varphi (x,z)+\varphi (z,y))e^{\psi (x,y,z)}.
\end{equation}
Assume also that $\varphi $ is invariant under transposition, with $\varphi
(x,y)=0\Leftrightarrow x=y$ and that $\psi $ is bounded above and below. We
can define limits and Cauchy sequences, hence completeness and the function 
$\varphi $ is continuous.
The following fixed point statement is not used elsewhere but seems interesting
on its own. 

\begin{proposition}
\label{fixedpointmultcost} Suppose given $\varphi ,\psi $ satisfying the
triangle inequality with multiplicative cost \eqref{timc}
as above. If $F$
is a map such that 
\[ \varphi (F(x),F(y))\leq k\varphi (x,y) \mbox{   and   } 
\]
\[
\psi
(F(x),F(y),F(z))\leq k\psi (x,y,z)
\] 
whenever both sides are positive, then
we get a Cauchy sequence $F^{k}(x)$. If $(X,\varphi )$ is complete then the
limit of this Cauchy sequence is the unique fixed point of $F$.
\end{proposition}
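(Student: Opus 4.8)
The plan is to follow the strategy of Lemma \ref{varphicontractingmap}, replacing the geometric series estimate by one that keeps track of the accumulated exponential cost factors coming from \eqref{timc}. Fix $x_0 \in X$ and set $x_n = F^n(x_0)$. As in that lemma, symmetry of $\varphi$ together with the contraction hypothesis $\varphi(Fx,Fy)\le k\varphi(x,y)$ give, by induction, $\varphi(x_j, x_{j+1}) \le k^j \varphi(x_0, x_1)$. Let $M$ be an upper bound for $\psi$ and write $C = e^M$.

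First I would telescope $\varphi(x_n, x_m)$ (for $m > n$) by applying \eqref{timc} repeatedly with the successive intermediate points $x_{n+1}, x_{n+2}, \ldots, x_{m-1}$. Writing $P_{i,m} = \psi(x_i, x_m, x_{i+1})$, this yields
\begin{equation*}
\varphi(x_n, x_m) \le \sum_{j=n}^{m-1} \varphi(x_j, x_{j+1}) \exp\Bigl(\sum_{i=n}^{j} P_{i,m}\Bigr).
\end{equation*}
The heart of the argument is then to bound the accumulated cost $\sum_{i=n}^{j} P_{i,m}$ uniformly in $j$ and $m$, and here the decay hypothesis on $\psi$ is essential. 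Since $x_i = F(x_{i-1})$, $x_m = F(x_{m-1})$ and $x_{i+1} = F(x_i)$, the condition on $\psi$ gives $P_{i,m} \le k\, P_{i-1, m-1}$; iterating down to level zero yields $P_{i,m} \le k^i \psi(x_0, x_{m-i}, x_1) \le k^i M$ for each positive term. Consequently $\sum_{i=n}^{j} P_{i,m} \le M \sum_{i \ge n} k^i = M k^n /(1-k)$, so every exponential factor is at most $\exp\!\bigl(Mk^n/(1-k)\bigr) \le C^{1/(1-k)}$, a constant independent of $j,m,n$. Combined with $\varphi(x_j, x_{j+1}) \le k^j\varphi(x_0,x_1)$ this gives
\begin{equation*}
\varphi(x_n, x_m) \le C^{1/(1-k)}\,\frac{k^n}{1-k}\,\varphi(x_0, x_1)\longrightarrow 0,
\end{equation*}
so $\{x_n\}$ is Cauchy. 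I expect the main obstacle to be precisely this cost estimate: one must handle carefully the caveat that the descent inequality for $\psi$ is asserted only where both sides are positive. The descent is organized so that this causes no trouble: if $P_{i,m}>0$ then necessarily $P_{i-1,m-1}>0$ (otherwise $P_{i,m}\le k\,P_{i-1,m-1}\le 0$), so the chain down to level zero stays in the positive range exactly when it needs to, and any sign change at an intermediate level only improves the bound.

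Finally, assuming completeness, let $z = \lim x_n$. To see that $z$ is fixed I would apply \eqref{timc} with intermediate point $x_{n+1}$:
\begin{equation*}
\varphi(Fz, z) \le \bigl(\varphi(Fz, x_{n+1}) + \varphi(x_{n+1}, z)\bigr)\, e^{\psi(Fz,\, z,\, x_{n+1})}.
\end{equation*}
Here $\varphi(Fz, x_{n+1}) = \varphi(Fz, Fx_n) \le k\varphi(z, x_n) \to 0$ and $\varphi(x_{n+1}, z) \to 0$, while the cost factor is bounded by $C$; hence $\varphi(Fz, z) = 0$ and $Fz = z$ by the nondegeneracy $\varphi(x,y)=0 \Leftrightarrow x=y$. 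Uniqueness is immediate, since two fixed points $z, z'$ satisfy $\varphi(z,z') = \varphi(Fz, Fz') \le k\varphi(z,z')$, forcing $\varphi(z,z') = 0$. Note that this last step uses only the boundedness of $\psi$, not its decay; the decay is needed solely to keep the telescoped cost factors from blowing up in the Cauchy estimate.
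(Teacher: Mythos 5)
Your proposal follows essentially the same route as the paper's proof: telescope $\varphi(x_n,x_m)$ through the intermediate iterates, use the $\psi$-contraction along the orbit to bound each accumulated cost $\sum_i P_{i,m}$ by $M\sum_{i\ge n}k^i$, conclude that every exponential factor is uniformly bounded, and finish with the geometric series; the fixed-point and uniqueness steps likewise match what the paper delegates to Lemma \ref{varphicontractingmap}. The one place you go beyond the paper is the parenthetical handling of the ``both sides positive'' caveat, and that step as written is circular: the inequality $P_{i,m}\le k\,P_{i-1,m-1}$ is only available when $P_{i-1,m-1}>0$ already holds, so it cannot be invoked to rule out the case $P_{i-1,m-1}\le 0$ (in which case the hypothesis gives no control on $P_{i,m}$ beyond the bound $M$, which is not enough to keep the accumulated cost uniformly bounded in $m$). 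This is, however, exactly the gap the paper's own proof silently steps over by applying $\psi(x_i,x_m,x_{i+1})\le k^i\psi(x_0,x_p,x_1)$ unconditionally, so apart from that unpersuasive parenthesis your argument coincides with, and is somewhat more detailed than, the one in the paper.
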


\begin{proof}
Let $x_{0}$ be an arbitrary point in $X$ and $\{x_{n}\}$ the sequence
defined by $x_{n+1}=F(x_{n})=F^{n}(x_{0})$ for all positive integer $n$. We
have
\[
\varphi (x_{n},x_{m}) 
\leq (\varphi (x_{n},x_{n+1})+\varphi
(x_{n+1},x_{m}))e^{\psi (x_{n},x_{m},x_{n+1})} 
\]
\[
\leq k^{n}\varphi (x_{0},x_{1})e^{\psi (x_{n},x_{m},x_{n+1})}+\varphi
(x_{n+1},x_{m})e^{\psi (x_{n},x_{m},x_{n+1})} 
\]
\[
\leq k^{n}\varphi (x_{0},x_{1})e^{\psi (x_{n},x_{m},x_{n+1})}+ 
\]
\[
(\varphi (x_{n+1},x_{n+2})+\varphi (x_{n+2},x_{m}))e^{\psi
(x_{n+1},x_{m},x_{n+2})+{\psi (x_{n},x_{m},x_{n+1})}} 
\]
\[
\leq k^{n}\varphi (x_{0},x_{1})e^{\psi (x_{n},x_{m},x_{n+1})}+ 
\]
\[
k^{n+1}\varphi (x_{0},x_{1})e^{\psi (x_{n+1},x_{m},x_{n+2})+\psi
(x_{n},x_{m},x_{n+1})}+ \]
\[
\varphi (x_{n+2},x_{m})e^{\psi (x_{n+1},x_{m},x_{n+2})+\psi
(x_{n},x_{m},x_{n+1})} 
\]
\[
\leq k^{n}\varphi (x_{0},x_{1})e^{\psi (x_{n},x_{m},x_{n+1})}+ 
\]
\[
k^{n+1}\varphi (x_{0},x_{1})e^{\psi (x_{n+1},x_{m},x_{n+2})+\psi
(x_{n},x_{m},x_{n+1})}+ 
\]
\[
k^{n+2}\varphi (x_{0},x_{1})e^{\psi (x_{n+2},x_{m},x_{n+3})+\psi
(x_{n+1},x_{m},x_{n+2})+\psi (x_{n},x_{m},x_{n+1})}+ 
\]
\[
\varphi (x_{n+3},x_{m})e^{\psi (x_{n+2},x_{m},x_{n+3})+\psi
(x_{n+1},x_{m},x_{n+2})+\psi (x_{n},x_{m},x_{n+1})} 
\]
\[
\leq k^{n}\varphi (x_{0},x_{1})e^{\psi (x_{n},x_{m},x_{n+1})}+ 
\]
\[
k^{n+1}\varphi (x_{0},x_{1})e^{\psi (x_{n+1},x_{m},x_{n+2})+\psi
(x_{n},x_{m},x_{n+1})}+ 
\]
\[
k^{n+2}\varphi (x_{0},x_{1})e^{\psi (x_{n+2},x_{m},x_{n+3})+\psi
(x_{n+1},x_{m},x_{n+2})+\psi (x_{n},x_{m},x_{n+1})}+...+ 
\]
\[
k^{m-1}\varphi (x_{0},x_{1})e^{\psi (x_{n},x_{m},x_{n+1})+\psi
(x_{n+1},x_{m},x_{n+2})+...+\psi
(x_{m-2},x_{m},x_{m-1})} 
\]
\[
\leq k^{n}\varphi (x_{0},x_{1})(e^{k^{n}\psi
(x_{0},x_{p},x_{1})}+ke^{k^{n}\psi (x_{0},x_{p},x_{1})+k^{n+1}\psi
(x_{0},x_{p},x_{1})}+ 
\]
\[
k^{2}e^{k^{n}\psi (x_{0},x_{p},x_{1})+k^{n+1}\psi
(x_{0},x_{p},x_{1})+k^{n+2}\psi (x_{0},x_{p},x_{1})}+ 
\]
\[
\ldots + 
\]
\[
k^{m-n-1}e^{k^{n}\psi (x_{0},x_{p},x_{1})+k^{n+1}\psi
(x_{0},x_{p},x_{1})+...+k^{m-1}\psi
(x_{0},x_{p},x_{1})}) 
\]
\[
\leq k^{n}\varphi
(x_{0},x_{1})(e^{Mk^{n}}+ke^{M(k^{n}+k^{n+1})}+k^{2}e^{M(k^{n}+k^{n+1}+k^{n+2})}+
\]
\[
\ldots + 
\]
\[
k^{m-n-1}e^{M(k^{n}+k^{n+1}+k^{n+2}+...+k^{m-1})})
\]
since $\psi $ is bounded. Hence, the sequence $\{x_{n}\}$ is Cauchy. Since 
$(X,\varphi )$ is complete, it converges to some $x\in X$. The rest of the
proof follows as in Lemma \ref{varphicontractingmap}.
\end{proof}

\section{Bounded $2$-metric spaces}
\label{sec-metr2}

G\"ahler defined the notion of $2$-metric space to be a set $X$ with function 
$d:X^3\rightarrow {\mathbb{R}}$ denoted $(x,y,z)\mapsto d(x,y,z)$
satisfying the following axioms \cite{Gahler1} \cite{Gahler2} \cite{Gahler3}:

\noindent (Sym)---that $d(x,y,z)$ is invariant under permutations of the
variables $x,y,z$.

\noindent (Tetr)---for all $a,b,c,x$ we have 
\begin{equation*}
d(a,b,c)\leq d(a,b,x)+d(b,c,x)+d(a,c,x).
\end{equation*}

\noindent (Z)---for all $a,b$ we have $d(a,b,b)=0$.

\noindent (N)---for all $a,b$ there exists $c$ such that $d(a,b,c)\neq 0$. 

One can think of $d(x,y,z)$ as measuring how far
are $x,y,z$ from being ``aligned'' or ``colinear''.

The $2$-metric spaces $(X,d)$ have been the subject of much study, 
see \cite{AbbasRhoades} and \cite{AbdElMonsefEtAl} for example.
The prototypical example of a $2$-metric space is obtained
by setting $d(x,y,z)$ equal to the area of the triangle spanned by $x,y,z$.

Assume that the $2$-metric is {\em bounded}, and by rescaling the bound can be supposed equal to $1$:

\noindent (B)---the function is bounded by $d(x,y,z)\leq 1$ for all 
$x,y,z\in X$.

Define the \emph{associated distance} by 
\begin{equation*}
\varphi (x,y):= \sup _{z\in X} d(x,y,z).
\end{equation*}

\begin{lemma}
\label{positive}
We have $d(x,y,z)\geq 0$ and hence $\varphi (x,y)\geq 0$. Also $\varphi
(x,x)=0$ and $\varphi (x,y)=\varphi (y,x)$.
\end{lemma}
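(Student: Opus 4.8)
The plan is to reduce everything to a single nonnegativity statement, namely $d(x,y,z)\geq 0$, since the three assertions about $\varphi$ then follow almost immediately from the definition $\varphi(x,y)=\sup_z d(x,y,z)$ together with the symmetry axiom (Sym) and the degeneracy axiom (Z). So the bulk of the work, and the only place where the tetrahedral inequality (Tetr) is genuinely needed, is the first claim.

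To prove $d(x,y,z)\geq 0$, the idea I would use is to feed a repeated index into (Tetr) so that the left-hand side and one of the three right-hand terms collapse by (Z), while (Sym) makes the two surviving terms coincide. Concretely, I would apply (Tetr) in the form $d(a,b,c)\leq d(a,b,x)+d(b,c,x)+d(a,c,x)$ with the specialization $c=a$. Then the left side is $d(a,b,a)$, which equals $d(b,a,a)$ by (Sym) and hence vanishes by (Z); the last right-hand term is $d(a,a,x)$, which likewise vanishes by (Sym) and (Z); and the two middle terms $d(a,b,x)$ and $d(b,a,x)$ are equal by (Sym). The inequality thus reads $0\leq 2\,d(a,b,x)$, giving $d(a,b,x)\geq 0$. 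Since $a,b,x$ are arbitrary, this is exactly the desired nonnegativity of $d$.

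With $d\geq 0$ in hand, the rest is bookkeeping. Since $\varphi(x,y)$ is a supremum of the nonnegative quantities $d(x,y,z)$ over a nonempty index set, we get $\varphi(x,y)\geq 0$ at once (and in fact $\varphi(x,y)\leq 1$ by (B)). For $\varphi(x,x)=0$, note that by (Sym) and (Z) every term satisfies $d(x,x,z)=d(z,x,x)=0$, so the supremum defining $\varphi(x,x)$ is a supremum of zeros. Finally, symmetry of $\varphi$ follows because (Sym) gives $d(x,y,z)=d(y,x,z)$ for every $z$, so the two suprema defining $\varphi(x,y)$ and $\varphi(y,x)$ range over identical families of values and must agree.

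I do not expect any serious obstacle: the only subtlety is choosing the substitution $c=a$, rather than, say, setting the free fourth variable $x$ equal to one of $a,b,c$, which merely returns the tautology $d(a,b,c)\leq d(a,b,c)$. Once the correct repeated index is selected, both the nonnegativity and the three consequences are immediate.
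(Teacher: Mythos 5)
Your proof is correct and follows essentially the same route as the paper: both collapse the tetrahedral inequality by repeating one of the three vertices (you take $c=a$, the paper takes $b=c$, which is the same move up to relabeling via (Sym)), obtaining $0\leq 2\,d(a,b,x)$, and then derive the three statements about $\varphi$ directly from the definition as a supremum together with (Sym) and (Z).
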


\begin{proof}
Applying the axiom (Tetr) with $b=c$, we get 
\begin{equation*}
d(a,b,b)\leq d(a,b,x) + d(b,a,x)+d(a,b,x).
\end{equation*}
By the axiom (Z) and the symmetry of $d$ we obtain $d(a,b,x)\geq 0$ and so 
$d(x,y,z)\geq 0$. Then, $\varphi (x,y)\geq 0$. Symmetry of $\varphi$ follows
from invariance of $d$ under permutations (Sym).
\end{proof}

\begin{lemma}
\label{trianglevariants}
We have the triangle inequality with cost \eqref{tic} 
\begin{equation*}
\varphi (x,y) \leq \varphi (x,z)+\varphi (z,y) +d(x,y,z).
\end{equation*}
Therefore 
\begin{equation*}
\varphi (x,y) \leq \varphi (x,z)+\varphi (z,y) + \min (\varphi (x,z),
\varphi (z,y))
\end{equation*}
and hence the asymmetric triangle inequality (AT)
\begin{equation*}
\varphi (x,y)\leq \varphi (x,z)+2\varphi (z,y).
\end{equation*}
\end{lemma}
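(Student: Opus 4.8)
The plan is to prove the three inequalities in sequence, deriving each from its predecessor. The first inequality, the triangle inequality with cost \eqref{tic}, is the crux and the only place where the axiom (Tetr) is invoked in an essential way; the remaining two are formal consequences.

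For the first inequality, I would fix $x,y,z$ and estimate $\varphi(x,y)=\sup_{w}d(x,y,w)$ by bounding $d(x,y,w)$ for an arbitrary $w\in X$. The natural move is to apply the tetrahedral inequality (Tetr) with the auxiliary point taken to be $z$, giving
\begin{equation*}
d(x,y,w)\leq d(x,y,z)+d(y,w,z)+d(x,w,z).
\end{equation*}
Now I would bound each of the last two terms by the associated distance: by definition and symmetry (Sym), $d(y,w,z)=d(y,z,w)\leq\sup_{u}d(y,z,u)=\varphi(y,z)=\varphi(z,y)$, and similarly $d(x,w,z)=d(x,z,w)\leq\varphi(x,z)$. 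Hence
\begin{equation*}
d(x,y,w)\leq d(x,y,z)+\varphi(z,y)+\varphi(x,z)
\end{equation*}
for every $w$, and taking the supremum over $w$ on the left gives exactly $\varphi(x,y)\leq\varphi(x,z)+\varphi(z,y)+d(x,y,z)$.

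The second inequality follows by observing that $d(x,y,z)$ itself is bounded by the associated distances: again using (Sym), $d(x,y,z)=d(x,z,y)\leq\varphi(x,z)$ and $d(x,y,z)=d(z,y,x)\leq\varphi(z,y)$, so $d(x,y,z)\leq\min(\varphi(x,z),\varphi(z,y))$. Substituting this into the triangle inequality with cost yields the second displayed inequality. For the third, I would simply note that $\min(\varphi(x,z),\varphi(z,y))\leq\varphi(z,y)$, so the right-hand side is at most $\varphi(x,z)+2\varphi(z,y)$, which is (AT) with constant $C=2$.

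I do not anticipate a genuine obstacle here, since everything reduces to a single application of (Tetr) followed by unwinding the definition of $\varphi$ as a supremum. The one point requiring a little care is the bookkeeping with the permutation symmetry (Sym): each term produced by (Tetr) must be rewritten so that the variable over which the supremum defining $\varphi$ ranges sits in the third slot, and one must remember that the supremum is taken \emph{after} the pointwise bound, not before. A secondary subtlety worth stating explicitly is the choice of $z$ (rather than a fresh auxiliary variable) as the interpolating point in (Tetr); this is what makes the cost term $d(x,y,z)$ rather than a supremum appear.
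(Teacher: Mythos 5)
Your proof is correct and follows essentially the same route as the paper's: apply (Tetr) with interpolating point $z$ to bound $d(x,y,w)$ for arbitrary $w$, absorb the two auxiliary terms into $\varphi(x,z)$ and $\varphi(z,y)$, take the supremum over $w$, and then derive the second and third inequalities from $d(x,y,z)\leq\min(\varphi(x,z),\varphi(z,y))\leq\varphi(z,y)$. No discrepancies to report.
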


\begin{proof}
We have 
\begin{equation*}
d(x,y,z_{0})\leq d(x,y,z) + d(y,z_{0},z)+d(x,z_{0},z)
\end{equation*}
\begin{equation*}
\leq \varphi (x,z)+\varphi (z,y) +d(x,y,z).
\end{equation*}
For the next statement, note that by definition 
\begin{equation*}
d(x,y,z)\leq \min (\varphi (x,z), \varphi (z,y)),
\end{equation*}
and for the last statement, $\min (\varphi (x,z), \varphi (z,y))\leq \varphi
(z,y)$.
\end{proof}

In particular the distance $\varphi$ satisfies the axioms (R), (S) and (AT)
of Section \ref{asymmtriangle}. This allows us to speak of limits, Cauchy sequences,
points of accumulation, completeness and compactness, see also \cite{Peppo}.  
For clarity it will usually
be specified that these notions concern the function $\varphi$. 
Axiom (N) for $d$ is equivalent to
strict reflexivity (SR) for $\varphi$; if this is not assumed from the start, it 
can be fixed as follows. 

\subsection{Nondegeneracy}

It is possible to start without supposing the nondegeneracy axiom (N),
define an equivalence relation, and obtain a $2$-metric on the quotient satisfying (N).
For the next lemma and its corollary,
we assume that $d$ satisfies all of
(Sym), (Tetr), (Z), (B), but not necessarily (N).

\begin{lemma}
\label{dphi}
If $a,b,x,y$ are any points then
\[
|d(a,b,x)-d(a,b,y)|\leq 2 \varphi (x,y).
\]
\end{lemma}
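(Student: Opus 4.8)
The statement to prove is the Lipschitz-type bound
\[
|d(a,b,x)-d(a,b,y)|\leq 2\varphi(x,y)
\]
for all points $a,b,x,y$, using only the axioms (Sym), (Tetr), (Z), (B), and the definition $\varphi(x,y)=\sup_z d(x,y,z)$.

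My plan is to exploit the tetrahedral inequality (Tetr) to compare $d(a,b,x)$ and $d(a,b,y)$ by introducing each of $x,y$ as the auxiliary fourth point for the triangle carried by the other. First I would apply (Tetr) to the triple $(a,b,x)$ with auxiliary point $y$, writing
\[
d(a,b,x)\leq d(a,b,y)+d(b,x,y)+d(a,x,y),
\]
having used (Sym) to arrange the three terms on the right. This rearranges to
\[
d(a,b,x)-d(a,b,y)\leq d(b,x,y)+d(a,x,y).
\]
The key observation is then that each of the two terms $d(b,x,y)$ and $d(a,x,y)$ contains the pair $\{x,y\}$, so by (Sym) and the definition of $\varphi$ as a supremum over the third variable, each is bounded by $\varphi(x,y)$. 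Hence the right-hand side is at most $2\varphi(x,y)$, giving
\[
d(a,b,x)-d(a,b,y)\leq 2\varphi(x,y).
\]
By symmetry of the argument — swapping the roles of $x$ and $y$ and applying (Tetr) to $(a,b,y)$ with auxiliary point $x$ — one obtains the reverse inequality $d(a,b,y)-d(a,b,x)\leq 2\varphi(x,y)$, and combining the two yields the absolute value bound.

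I do not expect any genuine obstacle here; the argument is a direct unwinding of the definitions, and the only points requiring mild care are the bookkeeping with (Sym) to ensure that the auxiliary terms really do share the index pair $\{x,y\}$, and the appeal to Lemma \ref{positive} (equivalently the direct consequence of (Tetr) and (Z)) to know $d\geq 0$ so that the individual terms are nonnegative and the supremum bound $d(b,x,y)\leq\varphi(x,y)$ is applied in the correct direction. The boundedness axiom (B) is not strictly needed for the inequality itself, but it guarantees that $\varphi(x,y)=\sup_z d(x,y,z)$ is finite so that the statement is meaningful. The cleanest presentation is to prove the one-sided bound once and then remark that interchanging $x$ and $y$ gives the other side.
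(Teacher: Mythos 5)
Your proof is correct and is essentially identical to the paper's: both apply (Tetr) with the other of $x,y$ as the auxiliary point, bound the two terms containing the pair $\{x,y\}$ by $\varphi(x,y)$ via the supremum definition, and conclude by symmetry for the reverse inequality. The extra remarks on nonnegativity and finiteness are sound but not needed beyond what the paper already establishes in Lemma \ref{positive}.
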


\begin{proof}
By condition (Tetr),
\[
d(a,b,y)\leq d(a,b,x)+d(b,y,x)+d(a,y,x) \leq d(a,b,x)+2\varphi (x,y).
\]
The same in the other direction gives the required estimate. 
\end{proof}

\begin{corollary}
If $x,y$ are two points with $\varphi (x,y)=0$ then for any $a,b$
we have $d(a,b,x)=d(a,b,y)$. Therefore, if $\sim$ is the equivalence relation
considered in the second paragraph of Section \ref{asymmtriangle},
the function $d$ descends to a function $(X/\sim )^3\rightarrow \rr$
satisfying the same properties but in addition its associated distance function
is strictly reflexive and $d$ satisfies (N). 
\end{corollary}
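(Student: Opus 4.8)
The plan is to prove the Corollary by first using Lemma \ref{dphi} to establish the pointwise invariance of $d$ under the equivalence relation, and then verifying that the induced function on the quotient inherits the required axioms. First I would observe that the initial assertion is immediate: if $\varphi(x,y)=0$, then Lemma \ref{dphi} gives $|d(a,b,x)-d(a,b,y)|\leq 2\varphi(x,y)=0$, so $d(a,b,x)=d(a,b,y)$ for all $a,b$. Using the symmetry axiom (Sym), the same conclusion holds when we vary any of the three slots rather than just the third.

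Next I would recall from the discussion in Section \ref{asymmtriangle} that $\sim$ is an equivalence relation and that $\varphi(x,y)=\varphi(x',y')$ whenever $x\sim x'$ and $y\sim y'$. The key step is to show that $d$ descends to the quotient, i.e.\ that if $a\sim a'$, $b\sim b'$, $c\sim c'$ then $d(a,b,c)=d(a',b',c')$. This follows by changing one coordinate at a time: from the invariance just established (applied in each slot via (Sym)) we get $d(a,b,c)=d(a',b,c)=d(a',b',c)=d(a',b',c')$. Hence the formula $\overline{d}(\overline{a},\overline{b},\overline{c}):=d(a,b,c)$ is well defined on $(X/\!\sim)^3$.

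Then I would check that $\overline{d}$ satisfies (Sym), (Tetr), (Z) and (B), which is routine since each of these is a universally quantified (in)equality among values of $d$ that passes verbatim to representatives. It also satisfies (N): the associated distance of $\overline{d}$ is exactly $\overline{\varphi}$, the descent of $\varphi$, and by the general discussion in Section \ref{asymmtriangle} this satisfies $\overline{\varphi}(\overline{x},\overline{y})=0\Leftrightarrow\overline{x}=\overline{y}$, which is precisely (SR); since (N) for $d$ is equivalent to (SR) for $\varphi$ as noted just before the Nondegeneracy subsection, $\overline{d}$ satisfies (N).

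I do not expect a serious obstacle here, as everything reduces to the elementary bound of Lemma \ref{dphi} together with the already-established properties of $\sim$. The only point requiring minor care is the supremum defining the associated distance: one must confirm that $\overline{\varphi}(\overline{x},\overline{y})=\sup_{\overline{z}}\overline{d}(\overline{x},\overline{y},\overline{z})$ agrees with the descent of $\varphi(x,y)=\sup_z d(x,y,z)$, which holds because the quotient map $X\to X/\!\sim$ is surjective and $\overline{d}$ is constant on equivalence classes, so the two suprema range over the same set of real values.
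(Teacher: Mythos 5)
Your proposal is correct and follows essentially the same route as the paper: apply Lemma \ref{dphi} to get the pointwise invariance, use (Sym) to propagate it to all three arguments and descend $d$ to the quotient one coordinate at a time, and identify the associated distance of the descended function with the descent of $\varphi$, which is strictly reflexive. You simply spell out the routine verifications (well-definedness, the axioms passing to representatives, the agreement of the two suprema) that the paper leaves implicit.
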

\begin{proof}
For the first statement, apply the previous lemma.  This invariance
applies in each of the three arguments since $d$ is invariant under permutations, which in
turn yields the descent of $d$ to a function on $(X/\sim )^3$. The associated distance
function is the descent of $\varphi$ which is strictly reflexive. 
\end{proof}

In view of this lemma, we shall henceforth assume that $\varphi$ satisfies (SR)
or equivalently $d$ satisfies (N) too.
In particular the limit of a sequence is unique if it exists. 
 
\subsection{Surjective mappings}

If $F$ is surjective, then a boundedness condition for $d$ implies the
same for $\varphi$. Since we are assuming that $d$ is globally bounded (condition (B)),
a surjective mapping cannot be strictly contractive:

\begin{lemma}
Suppose $F:X\rightarrow X$ is a map such that 
\begin{equation*}
d(F(x),F(y),F(z))\leq kd(x,y,z)
\end{equation*}
for some constant $k>0$. If $F$ is surjective then $\varphi (F(x),F(y))\leq
k\varphi (x,y)$. The global boundedness condition implies that $k\geq 1$ in 
this case. 
\end{lemma}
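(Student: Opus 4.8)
The plan is to prove the two assertions in turn, and in both cases the engine of the argument is the same observation: surjectivity of $F$ lets us replace a supremum taken over arbitrary third points by a supremum taken over points in the image of $F$, where the contractivity hypothesis can be applied.

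For the first assertion I would start from the definition of the associated distance, writing $\varphi(F(x),F(y)) = \sup_{w\in X} d(F(x),F(y),w)$. Since $F$ is surjective, every $w\in X$ has the form $w=F(z)$, so this supremum equals $\sup_{z\in X} d(F(x),F(y),F(z))$. Now the contractivity hypothesis applies termwise, $d(F(x),F(y),F(z))\leq k\,d(x,y,z)$, and taking the supremum over $z$ preserves the inequality (as $k>0$ is a fixed constant), giving $\varphi(F(x),F(y))\leq k\sup_{z} d(x,y,z) = k\varphi(x,y)$. This is the whole content of the first claim; the only thing to check is that the rewriting of the index set of the supremum is legitimate, which is exactly surjectivity.

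For the second assertion I would set $M:=\sup_{x,y\in X}\varphi(x,y)$ and bound it from both sides. The boundedness axiom (B) gives $\varphi(x,y)=\sup_z d(x,y,z)\leq 1$, so $M\leq 1<\infty$; and the nondegeneracy axiom (N) furnishes points $a,b,c$ with $d(a,b,c)\neq 0$, whence $\varphi(a,b)\geq d(a,b,c)>0$ and therefore $M>0$. Then, for arbitrary $u,v\in X$, surjectivity lets me write $u=F(x)$, $v=F(y)$, and the first assertion yields $\varphi(u,v)=\varphi(F(x),F(y))\leq k\varphi(x,y)\leq kM$. Taking the supremum over all $u,v$ gives $M\leq kM$, and since $0<M<\infty$ I may divide by $M$ to conclude $k\geq 1$.

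There is no real obstacle here: the lemma is essentially a bookkeeping consequence of surjectivity combined with the global bound. The one point genuinely worth flagging is the role of (N): it is precisely what guarantees $M>0$, and without it one could have $\varphi\equiv 0$, in which case $M\leq kM$ would be vacuous and the conclusion $k\geq 1$ would fail. (As an alternative avoiding the first assertion, one could iterate: $F^n$ is again surjective and satisfies $d(F^n x,F^n y,F^n z)\leq k^n d(x,y,z)\leq k^n$, so surjectivity of $F^n$ forces $d\leq k^n$ everywhere; if $k<1$ this tends to $0$, contradicting (N). I would present the supremum argument as the main route since it reuses the first assertion directly.)
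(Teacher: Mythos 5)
Your proposal is correct and takes essentially the same route as the paper: the first assertion is proved by rewriting the supremum over third points as a supremum over the image of $F$ via surjectivity, and the second by bounding the global supremum $M$ of $\varphi$ (which satisfies $0<M\leq 1$ by (N) and (B)) through $M\leq kM$. The paper phrases the second step as a contradiction rather than dividing by $M$ directly, but the argument is the same.
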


\begin{proof}
Suppose $x,y\in X$. For any $z\in X$, choose a preimage $w\in X$ such that 
$F(w)=z$ by surjectivity of $F$. Then 
$$
d(F(x),F(y),z) = d(F(x),F(y),F(w))\leq kd(x,y,w) \leq k\varphi (x,y).
$$
It follows that 
\begin{equation*}
\varphi (F(x),F(y))=\sup _{z\in X} d(F(x),F(y),z) \leq k\varphi (x,y).
\end{equation*}
Suppose now that $k<1$. Let $B$ be the supremum of $\varphi (x,y)$ for $x,y\in X$.
Then $0<B<1$ by conditions (N) and (B).  Therefore there exist $x,y$ such that
$kB < \varphi (x,y)$, but this contradicts the existence of $u$ and $v$ such that
$F(u)=x$ and $F(v)=y$. This shows that $k\geq 1$. 
\end{proof}

\section{Colinearity}
\label{colinfunc}

Consider a bounded $2$-metric space $(X,d)$ , 
that is to say satisfying
axioms (Sym), (Tetr), (Z), (N) and (B),
and require the following additional {\em transitivity axiom}:

\noindent (Trans)---for all $a,b,c,x,y$ we have 
\begin{equation*}
d(a,b,x) d(c,x,y)\leq d(a,x,y)+d(b,x,y).
\end{equation*}

In Section \ref{example} below we will see that the standard area function,
as well as a form of geodesic
area function on ${\mathbb{RP}^2}$, satisfy this additional axiom. 
The terminology ``transitivity'' comes from the fact that this
condition implies a transitivity property of the
relation of colinearity, see Lemma \ref{coltrans} below.

The term $d(c,x,y)$ may be replaced by its ${\rm sup}$ over $c$ which is $\varphi (x,y)$. 
If we think of $d(a,b,x)$ as being a family of distance-like functions of $a$ and $b$,
indexed by $x\in X$, (Trans) can be rewritten 
\[
d(a,b,x)\leq (d(a,y,x) + d(y,b,x))\varphi (x,y)^{-1}
\]
for $y\neq x$. This formulation
may be related to the notion of ``triangle inequality with multiplicative cost'' \eqref{timc} discussed in Section \ref{tricost}.

\begin{definition}
\label{def-colinear}
Say that $(x,y,z)$ are \emph{colinear} if $d(x,y,z)=0$.
A \emph{line} is a maximal subset $Y\subset X$ consisting of colinear points,
that is to say satisfying 
\begin{equation}
\label{subline}
\forall x,y,z\in Y, \;\;
d(x,y,z)=0.
\end{equation}
\end{definition}

The colinearity condition is symmetric under permutations by (Sym). 

\begin{lemma}
\label{coltrans}
Using all of the above axioms including (N) and assumption (Trans),
colinearity satisfies the following transitivity property:
if $x,y,z$ are colinear, $y,z,w$ are colinear, and $y\neq z$, then 
$x,y,w$ and $x,z,w$ are colinear.
\end{lemma}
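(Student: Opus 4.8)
The plan is to reduce everything to the multiplicative reformulation of (Trans) already recorded just before Definition~\ref{def-colinear}. Starting from the axiom $d(a,b,x)\,d(c,x,y)\le d(a,x,y)+d(b,x,y)$, I would take the supremum over $c$: the right-hand side does not involve $c$, and $d\ge 0$ by Lemma~\ref{positive}, so
\[
d(a,b,x)\,\varphi (x,y)\le d(a,x,y)+d(b,x,y)
\]
for all $a,b,x,y$. The entire argument then consists of two applications of this single inequality, one for each of the two colinearity conclusions, arranged so that the right-hand side collapses to the two hypotheses $d(x,y,z)=0$ and $d(y,z,w)=0$.

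The key observation driving the choice of substitution is that both hypotheses share the pair $(y,z)$, so I would take $(y,z)$ as the pivot pair. For the first conclusion I would apply the inequality with free variables $x$ and $w$, obtaining $d(x,w,y)\,\varphi(y,z)\le d(x,y,z)+d(w,y,z)$; by (Sym) the left factor is $d(x,y,w)$ and the right-hand side is $d(x,y,z)+d(y,z,w)=0$. Before concluding I must record the one genuinely necessary use of the hypothesis $y\neq z$: since $d$ satisfies (N), equivalently $\varphi$ is strictly reflexive, $y\neq z$ forces $\varphi(y,z)>0$. Dividing through (or simply noting $d(x,y,w)\ge 0$ and $\varphi(y,z)>0$ against a nonpositive right-hand side) yields $d(x,y,w)=0$. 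For the second conclusion I would merely interchange the roles of $y$ and $z$ in the pivot, applying the inequality as $d(x,w,z)\,\varphi(z,y)\le d(x,z,y)+d(w,z,y)$; the right-hand side is again $d(x,y,z)+d(y,z,w)=0$ and $\varphi(z,y)=\varphi(y,z)>0$, so $d(x,z,w)=0$.

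I do not expect a substantive obstacle here: the proof is essentially bookkeeping with (Sym) and Lemma~\ref{positive} once the multiplicative form of (Trans) is in hand. The only point requiring care is conceptual rather than technical, namely recognizing that the two colinearity hypotheses overlap precisely in the pair $(y,z)$, which is what makes $(y,z)$ the correct pivot and forces the right-hand side to vanish. Correspondingly, the hypothesis $y\neq z$ enters in exactly one place — to guarantee $\varphi(y,z)>0$ — and without it the resulting inequality would read $0\le 0$ and carry no information, which is why the statement cannot dispense with it.
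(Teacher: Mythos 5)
Your proposal is correct and follows essentially the same route as the paper: both take the supremum over $c$ in (Trans) to obtain $d(x,y,w)\,\varphi(y,z)\le d(x,y,z)+d(y,z,w)$, invoke (N) to get $\varphi(y,z)>0$ from $y\neq z$, and conclude by symmetry for $d(x,z,w)$. Your write-up is in fact slightly more explicit than the paper's about the substitutions and the role of (Sym) and Lemma~\ref{positive}, but there is no substantive difference.
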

\begin{proof}
By (N), $\varphi (y,z)\neq 0$, then use the above version of (Trans) rewritten after
some permutations as
\[
d(x,y,w)\leq (d(x,y,z) + d(y,z,w))\varphi (y,z)^{-1}.
\]
This shows that $x,y,w$ are colinear. Symmetrically, the same for $x,z,w$.
\end{proof}

A line is nonempty, by maximality since $d(y,y,y)=0$ by (Z).

\begin{lemma}
\label{lineunique}
If $x\neq y$ are two
points then there is a unique line $Y$ containing $x$ and $y$, and $Y$
is the set of points $a$ colinear with $x$ and $y$, i.e. such that $d(a,x,y)=0$.
\end{lemma}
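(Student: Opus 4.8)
The plan is to prove existence and uniqueness separately, with the transitivity of colinearity (Lemma \ref{coltrans}) doing the main work. Define $Y := \{a \in X : d(a,x,y) = 0\}$, the set of points colinear with the given pair $x \neq y$. First I would check that $Y$ contains $x$ and $y$: indeed $d(x,x,y) = 0$ and $d(y,x,y) = 0$ by axiom (Z) together with symmetry (Sym). So $Y$ is a candidate set containing both points, and it remains to show (i) $Y$ is actually a line in the sense of Definition \ref{def-colinear}, i.e.\ every triple from $Y$ is colinear, and (ii) $Y$ is the unique line containing $x$ and $y$.

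For step (i), the crux is to show that any three points $a,b,c \in Y$ satisfy $d(a,b,c) = 0$. Each of $a,b,c$ is colinear with the pair $x,y$. I would apply Lemma \ref{coltrans} repeatedly, exploiting that $x \neq y$ so that $\varphi(x,y) \neq 0$ by (N), which is exactly the nondegeneracy hypothesis that lets the transitivity step go through. Concretely: $a,x,y$ colinear and $b,x,y$ colinear, with $x \neq y$, gives (after the appropriate permutation) that $a,b,x$ and $a,b,y$ are colinear. Now I have $a,b,x$ colinear and $c,x,y$ colinear; to conclude $a,b,c$ colinear I would pivot on a pair among $\{a,b,x\}$ that is distinct and again invoke Lemma \ref{coltrans}. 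The bookkeeping here is the one place requiring care: one must arrange the invocations so that the ``equal pair'' hypothesis ($y \neq z$ in the statement of the lemma) is met at each application. If some of the intermediate points coincide (say $a = x$), the conclusion is immediate from axiom (Z), so those degenerate cases can be dispatched directly; the generic case is handled by chaining the lemma. This verifies that $Y$ satisfies \eqref{subline}.

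For step (ii), uniqueness and maximality, suppose $Y'$ is any line containing both $x$ and $y$. If $a \in Y'$, then $a,x,y \in Y'$ are colinear, so $d(a,x,y) = 0$, whence $a \in Y$; thus $Y' \subseteq Y$. Since $Y'$ is maximal among colinear sets and $Y$ is itself a colinear set (by step (i)) containing $Y'$, maximality forces $Y' = Y$. This simultaneously shows that $Y$ is a line (it is a colinear set not properly contained in any larger one, being equal to every line through $x,y$) and that it is the unique such line. In particular $Y$ is itself the unique line containing $x$ and $y$, and by construction it equals $\{a : d(a,x,y) = 0\}$, as claimed.

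The main obstacle I anticipate is the combinatorial chaining in step (i): Lemma \ref{coltrans} only concludes colinearity of triples that share two points with one of the input triples, so passing from ``$a$, $b$, $c$ are each individually colinear with $\{x,y\}$'' to ``$a,b,c$ are mutually colinear'' requires a correctly ordered sequence of applications, together with a clean treatment of the cases where points coincide. Everything else is routine set-theoretic manipulation built on Definition \ref{def-colinear} and the nondegeneracy furnished by (N).
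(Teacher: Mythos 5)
Your proposal is correct and rests on the same engine as the paper's proof, namely repeated application of Lemma \ref{coltrans} with the pair $x\neq y$ (nondegenerate by (N)) as the initial pivot; but the logical organization is genuinely different. The paper starts from an abstractly given \emph{maximal} colinear set $Y$ containing $\{x,y\}$ (whose existence is asserted, implicitly via Zorn's lemma) and then shows that any $a$ with $d(a,x,y)=0$ is colinear with every pair $u,v\in Y$, so that $Y\cup\{a\}$ still satisfies \eqref{subline} and maximality forces $a\in Y$; the identification $Y=\{a: d(a,x,y)=0\}$ then gives uniqueness. You instead take the explicit set $Y=\{a:d(a,x,y)=0\}$ as the definition, verify directly that every triple from it is colinear, and check maximality by observing that any colinear set containing $x$ and $y$ sits inside $Y$. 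Your route buys a constructive description of the line with no appeal to the existence of maximal colinear sets, at the cost of the three-point chaining you correctly flag as the delicate step. One small correction there: after obtaining $(a,b,x)$ colinear from the pivot $(x,y)$, you cannot pivot $(a,b,x)$ directly against $(c,x,y)$, since they share only the single point $x$; you first need the intermediate conclusion that $(b,x,c)$ (or $(a,x,c)$) is colinear, obtained by pivoting $(b,x,y)$ against $(c,x,y)$ on the pair $(x,y)$, and only then pivot $(a,b,x)$ against $(b,x,c)$ on the pair $(b,x)$ when $b\neq x$, the case $b=x$ being covered by the already-established colinearity of $(a,x,c)$. With that sequencing made explicit, and the wholly degenerate coincidences handled by (Z), your argument is complete.
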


\begin{proof}
The set $\{ x,y\}$ satisfies Condition \eqref{subline}, so there is at least one
maximal such set $Y$ containing $x$ and $y$. Choose one such $Y$.   
If $a\in Y$ then automatically $d(a,x,y)=0$. 

Suppose $d(a,x,y)=0$. By Lemma \ref{coltrans}, $d(a,x,u)=0$
for any $u\in Y$.

Now suppose $u,v\in Y$. If $u=x$ then the preceding shows that
$d(a,u,v)=0$. If $u\neq x$ then, since $d(x,u,v)=0$ and $d(a,x,u)=0$,
Lemma \ref{coltrans} implies that $d(a,u,v)=0$. This shows that $a$ is
colinear with any two points of $Y$. In particular, $Y\cup \{ a\}$ also satisfies
Condition \eqref{subline} so by maximality, $a\in Y$. This shows that $Y$
is the set of points $a$ such that $d(a,x,y)=0$, which characterizes it uniquely. 
\end{proof}

The notions of colinearity and lines come from the geometric examples of
$2$-metrics which will be discussed in Section \ref{example} below. 
It should be pointed out that there can be interesting examples of $2$-metric
spaces which don't satisfy the transitivity condition of Lemma \ref{coltrans} and
which therefore don't satisfy Axiom (Trans). The remainder of our discussion doesn't apply 
to such examples. 

We assume Axiom (Trans) from now on. It
allows us to look at the question of fixed subsets of
a contractive mapping $F$ when $F$ is not surjective.
In addition to the possibility of having a fixed point, there will also be the
possibility of having a fixed line. We see in examples below that this can happen.

\begin{definition}
\label{def-lim}
Consider a sequence of points $x_i\in X$. The property $LIM(y,(x_i))$ is
defined to mean: 
\begin{equation*}
\forall \epsilon > 0 \; \exists a_{\epsilon}, \; \forall i,j\geq
a_{\epsilon}, \; d(y,x_i,x_j)< \epsilon .
\end{equation*}
\end{definition}

Suppose $LIM(y,(x_i))$ and $LIM (y',(x_i))$. We would like to show that 
$d(y,y',x_i)\rightarrow 0$. However, this is not necessarily true: if $(x_i)$
is Cauchy then the properties $LIM$ are automatic (see Proposition \ref{dcontinuous} below).
So, we need to include the hypothesis that our sequence is not Cauchy, in the 
following statements. 

\begin{lemma}
\label{dlim}
Suppose $(x_i)$ is not Cauchy. If both $LIM(y,(x_i))$ and $LIM(y^{\prime},(x_i))$ hold,
then $d(y,y',x_i)\rightarrow 0$ as $i\rightarrow \infty$.
\end{lemma}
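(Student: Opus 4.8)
The plan is to feed the two hypotheses $LIM(y,(x_i))$ and $LIM(y',(x_i))$ into the transitivity axiom (Trans), using the failure of the Cauchy property to supply a lower bound that lets us divide. First I would specialize (Trans) by setting its first two arguments to $y$ and $y'$, its last two (index) arguments to $x_i$ and $x_j$, and leaving the middle argument $c$ free, so that for every $c$, $i$, $j$,
\begin{equation*}
d(y,y',x_i)\, d(c,x_i,x_j) \leq d(y,x_i,x_j) + d(y',x_i,x_j).
\end{equation*}
Since $d(y,y',x_i)\geq 0$ is constant in $c$ (Lemma \ref{positive}), taking the supremum over $c\in X$ on the left turns $d(c,x_i,x_j)$ into $\varphi(x_i,x_j)$ while the right-hand side is unchanged, giving the master inequality
\begin{equation*}
d(y,y',x_i)\,\varphi(x_i,x_j) \leq d(y,x_i,x_j) + d(y',x_i,x_j)
\end{equation*}
valid for all $i,j$. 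The two $LIM$ hypotheses will make the right-hand side as small as we like once $i$ and $j$ are large, so everything reduces to securing a lower bound on $\varphi(x_i,x_j)$ for a suitable $j$.

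This lower bound is exactly where the non-Cauchy hypothesis and the main difficulty enter. Failure of the Cauchy property for $\varphi$ yields a constant $\epsilon_0>0$ and indices $m_k,n_k\to\infty$ with $\varphi(x_{m_k},x_{n_k})\geq \epsilon_0$. The obstacle is that this produces far-apart \emph{pairs}, whereas the master inequality needs, for each fixed large $i$, a single partner $j$ with $\varphi(x_i,x_j)$ bounded below. I would overcome this with the asymmetric triangle inequality of Lemma \ref{trianglevariants}: applying $\varphi(u,w)\leq \varphi(u,v)+2\varphi(v,w)$ with $v=x_i$ gives
\begin{equation*}
\epsilon_0 \leq \varphi(x_{m_k},x_{n_k}) \leq \varphi(x_{m_k},x_i) + 2\varphi(x_i,x_{n_k}),
\end{equation*}
so at least one of $\varphi(x_i,x_{m_k})$, $\varphi(x_i,x_{n_k})$ is $\geq \epsilon_0/3$. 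Hence every fixed $x_i$ admits a partner $x_j$, with $j\in\{m_k,n_k\}$ as large as desired, for which $\varphi(x_i,x_j)\geq \epsilon_0/3$.

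To finish, fix $\epsilon>0$ and set $\delta=\epsilon\epsilon_0/6$. By $LIM(y,(x_i))$ and $LIM(y',(x_i))$ choose $A$ so that $d(y,x_i,x_j)<\delta$ and $d(y',x_i,x_j)<\delta$ whenever $i,j\geq A$. For any $i\geq A$, pick $k$ large enough that $m_k,n_k\geq A$ and take the partner $j\in\{m_k,n_k\}$ with $\varphi(x_i,x_j)\geq\epsilon_0/3$ from the previous step; then the master inequality gives
\begin{equation*}
d(y,y',x_i)\,\frac{\epsilon_0}{3} \leq d(y,y',x_i)\,\varphi(x_i,x_j) \leq 2\delta,
\end{equation*}
whence $d(y,y',x_i)\leq 6\delta/\epsilon_0=\epsilon$. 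As this holds for all $i\geq A$, we conclude $d(y,y',x_i)\to 0$. The one step that requires genuine care, rather than routine estimation, is the passage from far-apart pairs to a far-apart partner for each fixed index; the rest is bookkeeping with the two $LIM$ thresholds.
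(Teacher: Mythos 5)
Your proof is correct, and the engine is the same as the paper's: feed the two $LIM$ hypotheses into (Trans) and use the failure of the Cauchy property to produce a positive quantity to divide by. The difference lies in how each argument bridges the gap between the non-Cauchy condition, which only supplies far-apart \emph{pairs} $(x_{i(m)},x_{j(m)})$, and the desired conclusion at \emph{every} large index $i$. The paper first applies (Trans) with a near-maximizing witness $z(m)$ satisfying $d(x_{i(m)},x_{j(m)},z(m))\geq \epsilon_0/2$, obtains $d(y,y',x_{i(m)})\leq\epsilon$ only along the special indices $i(m)$, and then propagates this to an arbitrary index $j\geq m$ by a second application of (Tetr), at the cost of a factor of $3$. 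You instead resolve the pairs-versus-partners issue up front: the asymmetric triangle inequality $\varphi(x_{m_k},x_{n_k})\leq\varphi(x_{m_k},x_i)+2\varphi(x_i,x_{n_k})$ of Lemma \ref{trianglevariants} shows that each fixed $x_i$ already has a partner $x_j$ with $\varphi(x_i,x_j)\geq\epsilon_0/3$ and $j$ as large as desired, after which a single application of the sup-over-$c$ form of (Trans) closes the argument with no propagation step. Both routes ultimately rest on (Tetr) (yours through the derived (AT) property of $\varphi$), and the constants are comparable; your version is slightly cleaner in that the estimate is obtained directly at every index rather than transferred from a subsequence, while the paper's version avoids invoking the $\varphi$-level triangle inequality and works entirely with $d$.
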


\begin{proof}
The sequence $(x_i)$ is supposed not to be Cauchy for $\varphi$, so there
exists $\epsilon _0>0$ such that for any $m\geq 0$ there are $i,j\geq m$
with $\varphi (x_i,x_j)\geq \epsilon _0$. Therefore, in view of the
definition of $\varphi$, for any $m$ there exist 
$i(m),j(m)\geq m$ and a point $z(m)\in X$ such that $d(x_{i(m)}, x_{j(m)},
z(m))\geq \epsilon _0/2$.

We now use condition (Trans) with $x=x_{i(m)}$ and $y=x_{j(m)}$ and $c=z(m)$,
for $a=y$ and $b=y^{\prime}$. This says 
\begin{equation*}
d(y,y^{\prime},x_{i(m)})\epsilon _0/2 \leq d(y,x_{i(m)}, x_{j(m)})+
d(y^{\prime},x_{i(m)}, x_{j(m)}).
\end{equation*}
If $LIM(y,(x_i))$, $LIM(y^{\prime},(x_i))$, then for any $\epsilon$ we can
assume $m$ is big enough so that 
\begin{equation*}
d(y,x_{i(m)}, x_{j(m)})\leq \epsilon \epsilon _0/4
\end{equation*}
and 
\begin{equation*}
d(y^{\prime},x_{i(m)}, x_{j(m)})\leq \epsilon \epsilon _0/4 .
\end{equation*}
Putting these together gives $d(y,y^{\prime},x_{i(m)})\leq \epsilon$. 

Choose $m$ so that for all 
$j,k\geq m$ we have $d(y,x_j,x_k)\leq \epsilon$ and the same for $y^{\prime}$. 
Then we have by (Tetr), for any $j\geq m$ 
\begin{equation*}
d(y^{\prime},y,x_j)\leq
d(x_{i(m)},y,x_j)+d(y^{\prime},x_{i(m)},x_j)+d(y^{\prime},y,x_{i(m)})\leq 3\epsilon .
\end{equation*}
Changing $\epsilon$ by a factor of three, we obtain the following statment:
for any $\epsilon > 0$ there exists $m$ such that for all $i\geq m$ we have 
$d(y^{\prime},y,x_i)\leq \epsilon$. This is the required convergence. 
\end{proof}

\begin{corollary}
\label{limcolin}
If the sequence $(x_i)$ is not
Cauchy for the distance $\varphi $, then the following property holds:
\newline
---if $LIM(y,(x_i))$, $LIM(y^{\prime},(x_i))$, and $LIM(y^{\prime\prime},(x_i))$
then $(y,y^{\prime},y^{\prime\prime})$ are colinear. 
\end{corollary}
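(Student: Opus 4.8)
The plan is to bound $d(y,y',y'')$ directly using the tetrahedral axiom with the sequence point $x_i$ as the auxiliary fourth vertex, and then to let $i\to\infty$, invoking Lemma \ref{dlim} once for each of the three pairs among $y,y',y''$. Concretely, I would first apply (Tetr) with base triple $y,y',y''$ and auxiliary point $x_i$ (reordering variables as permitted by (Sym) to match the exact form of the axiom), obtaining
\[
d(y,y',y'')\leq d(y,y',x_i)+d(y',y'',x_i)+d(y,y'',x_i).
\]

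Next I would observe that the hypothesis that $(x_i)$ is not Cauchy is exactly the standing assumption needed to apply Lemma \ref{dlim}. Each of the three pairs $\{y,y'\}$, $\{y',y''\}$, $\{y,y''\}$ consists of two points both satisfying the $LIM$ property relative to $(x_i)$, so Lemma \ref{dlim} applies separately to each pair and shows that each of the three terms on the right-hand side above tends to $0$ as $i\to\infty$. Since the left-hand side $d(y,y',y'')$ is a fixed number independent of $i$, passing to the limit gives $d(y,y',y'')\leq 0$; combined with the positivity $d\geq 0$ established in Lemma \ref{positive}, this forces $d(y,y',y'')=0$, which is precisely the assertion that $(y,y',y'')$ are colinear.

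I do not expect any genuine obstacle here: the analytic work is entirely absorbed into Lemma \ref{dlim}, and this statement is essentially a clean three-fold application of it glued together by one use of the tetrahedral inequality. The only points meriting a moment's care are that the non-Cauchy hypothesis must be explicitly carried over so that Lemma \ref{dlim} is legitimately invocable for all three pairs, and that the tetrahedral inequality be written with the index order that (Tetr) actually requires before taking limits.
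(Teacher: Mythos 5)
Your argument is correct and is exactly the paper's proof: one application of (Tetr) with $x_i$ as the fourth vertex, followed by Lemma \ref{dlim} applied to each of the three pairs, and nonnegativity of $d$ to conclude. No differences worth noting.
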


\begin{proof}
We use the fact that 
\begin{equation*}
d(y,y^{\prime},y^{\prime\prime})\leq d(y,y^{\prime},x_i) +
d(y^{\prime},y^{\prime\prime},x_i)+d(y,y^{\prime\prime},x_i).
\end{equation*}
By Lemma \ref{dlim}, all three terms on the right approach $0$ as $i\rightarrow \infty$.
This proves that $d(y,y^{\prime},y^{\prime\prime})=0$.
\end{proof}

\begin{lemma}
\label{colinlim}
Suppose that $(x_i)$ is not Cauchy for the distance $\varphi $. If 
$LIM(y,(x_i))$, $LIM(y^{\prime},(x_i))$, $\varphi (y,y^{\prime})>0$ and 
$y^{\prime\prime}$ is a point such that $(y,y^{\prime},y^{\prime\prime})$ are
colinear, then also $LIM(y^{\prime\prime},(x_i))$.
\end{lemma}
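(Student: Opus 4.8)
The plan is to deduce $LIM(y'',(x_i))$, that is $d(y'',x_i,x_j)\to 0$, from the two facts available: that $y''$ lies on the line through $y$ and $y'$ (the colinearity hypothesis $d(y,y',y'')=0$), and that $y,y'$ are $LIM$-points of the sequence. I would organize the argument as a quantitative version of the transitivity of colinearity (Lemma~\ref{coltrans}), followed by a single application of the tetrahedral inequality (Tetr). First I would record, via Lemma~\ref{dlim} together with the hypothesis that $(x_i)$ is not Cauchy, that $\gamma_i := d(y,y',x_i)\to 0$ as $i\to\infty$.

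The crux is to pass from this to the intermediate estimate $d(y,y'',x_i)\to 0$. The main obstacle, and the reason this step requires thought, is that the naive move fails: if one applies (Trans) along the non-Cauchy indices $(x_{i(m)},x_{j(m)})$ exactly as in the proof of Lemma~\ref{dlim}, the coefficient multiplying the quantity one wants to bound turns out to be $d(y,y',x_i)$ itself, which tends to $0$ and renders the inequality vacuous. The hypothesis $\varphi(y,y')>0$ must enter precisely here. I would instead apply (Trans) with the \emph{pair} $(y,y')$ in the role of the special indices and take the supremum over the free variable, so that the coefficient becomes the \emph{positive} number $\varphi(y,y')$ rather than a vanishing term. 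Concretely, using (Trans) in the form $d(a,b,x)\,\varphi(x,y)\le d(a,x,y)+d(b,x,y)$ with $a=y''$, $b=x_i$ and $(x,y)=(y,y')$ gives
\[
d(y,y'',x_i)\,\varphi(y,y')\;\le\; d(y,y',y'')+d(y,y',x_i).
\]
The first term on the right vanishes by colinearity of $(y,y',y'')$, and the second is $\gamma_i$; dividing by $\varphi(y,y')>0$ yields $d(y,y'',x_i)\le \gamma_i/\varphi(y,y')\to 0$.

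Finally I would invoke (Tetr) with the auxiliary point $y$, in the form
\[
d(y'',x_i,x_j)\;\le\; d(y,y'',x_i)+d(y,x_i,x_j)+d(y,y'',x_j).
\]
The outer two terms tend to $0$ by the previous step, and the middle term tends to $0$ because $LIM(y,(x_i))$ holds by hypothesis. Hence $d(y'',x_i,x_j)\to 0$, which is exactly $LIM(y'',(x_i))$. I expect the only genuine difficulty to lie in the middle step: recognizing that one must arrange for $\varphi(y,y')$ — and not the quantity $d(y,y',x_i)$ that Lemma~\ref{dlim} already drives to zero — to be the multiplicative coefficient furnished by (Trans); once this is done, the reflexivity axiom (Z) handles the degenerate case $y''\in\{y,y'\}$ automatically, and the remaining manipulations are routine.
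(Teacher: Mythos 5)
Your proposal is correct and follows essentially the same route as the paper's proof: both apply (Trans) with the pair $(y,y')$ in the base position so that the multiplicative coefficient is bounded below by $\varphi(y,y')>0$ (the paper picks a witness $z$ with $d(y,y',z)=\epsilon_1>0$ where you take the supremum directly, an immaterial difference), deduce $d(y'',y,x_i)\to 0$ from colinearity and Lemma~\ref{dlim}, and conclude with one application of (Tetr) through the auxiliary point $y$. Your version is a slightly more economical write-up of the identical argument.
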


\begin{proof}
By Lemma \ref{dlim}, 
for any $\epsilon > 0$ there exists $m$ such that for all $i\geq m$ we have 
$d(y^{\prime},y,x_i)\leq \epsilon$.

Let $u,v$ denote some $x_i$ or $x_j$. By hypothesis $\varphi (y,y^{\prime})>0$ so
there is a point $z$ such that 
$d(y,y^{\prime},z)=\epsilon _1>0$. Then condition (Trans) applied with 
$a=y^{\prime\prime}, b=u,x=y^{\prime},c=z, y=y$ gives 
\begin{equation*}
d(y^{\prime\prime},u,y^{\prime})d(z,y^{\prime},y)\leq
d(y^{\prime\prime},y^{\prime},y)+ d(u,y^{\prime},y).
\end{equation*}
Hence 
\begin{equation*}
d(y^{\prime\prime},u,y^{\prime})\leq d(u,y^{\prime},y)/\epsilon _1.
\end{equation*}
We can do the same for $v$, and also interchanging $y$ and $y^{\prime}$, to
get 
\begin{equation*}
d(y^{\prime\prime},v,y^{\prime})\leq d(v,y^{\prime},y)/\epsilon _1,
\end{equation*}
\begin{equation*}
d(y^{\prime\prime},u,y)\leq d(u,y,y^{\prime})/\epsilon _1,
\end{equation*}
\begin{equation*}
d(y^{\prime\prime},v,y)\leq d(v,y,y^{\prime})/\epsilon _1.
\end{equation*}
We have 
\begin{equation*}
d(y^{\prime\prime},u,v)\leq d(y,u,v) +
d(y^{\prime\prime},y,v)+d(y^{\prime\prime},u,y)
\end{equation*}
\begin{equation*}
\leq d(y,u,v) +(d(u,y,y^{\prime})+d(v,y,y^{\prime}))/\epsilon _1.
\end{equation*}
For $u=x_i$ and $v=x_j$ with $i,j\geq m$ as previously we get 
\begin{equation*}
d(y^{\prime\prime},x_i,x_j)\leq d(y,x_i,x_j)
+(d(x_i,y,y^{\prime})+d(x_j,y,y^{\prime}))/\epsilon _1.
\end{equation*}
By choosing $m$ big enough this can be made arbitrarily small, thus giving
the condition $LIM(y^{\prime\prime},(x_i))$.
\end{proof}

\begin{theorem}
\label{possibilities} 
Suppose that $(X,d)$ is a bounded $2$-metric space satisfying axiom (Trans) as above.
Suppose $(x_i)$ is a sequence. Then there are the
following possibilities (not necessarily exclusive): \newline
---there is no point $y$ with $LIM(y,(x_i))$; \newline
---there is exactly one point $y$ with $LIM(y,(x_i))$; \newline
---the sequence $(x_i)$ is Cauchy for the distance $\varphi$; or \newline
---the subset $Y\subset X$ of points $y$ such that $LIM(y,(x_i))$, is a line.
\end{theorem}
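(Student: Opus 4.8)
The plan is to prove the single implication that matters: since the four possibilities are not required to be exclusive, it suffices to show that if the first three fail then the fourth holds. So I would begin by assuming that neither of the first two possibilities occurs, meaning the set $Y=\{y:LIM(y,(x_i))\}$ contains at least two \emph{distinct} points $y_0\neq y_1$, and also that possibility 3 fails, i.e. $(x_i)$ is not Cauchy for $\varphi$. By strict reflexivity (SR), the distinctness $y_0\neq y_1$ forces $\varphi(y_0,y_1)>0$, which is the key nondegeneracy input for what follows.

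First I would establish that $Y$ consists of colinear points, i.e. that it satisfies condition \eqref{subline}. This is immediate from Corollary \ref{limcolin}: given any $y,y',y''\in Y$, all three satisfy the $LIM$ property and $(x_i)$ is not Cauchy, so $(y,y',y'')$ are colinear. Hence $Y$ is a set of mutually colinear points and is a legitimate candidate to be contained in a line.

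Next I would identify $Y$ with an actual line. By Lemma \ref{lineunique} there is a unique line $L$ through the distinct points $y_0,y_1$, namely $L=\{a:d(a,y_0,y_1)=0\}$. The inclusion $Y\subseteq L$ follows from the previous step, since any $y\in Y$ is colinear with $y_0,y_1$ and therefore lies in $L$. For the reverse inclusion $L\subseteq Y$, I would take $a\in L$, so that $(y_0,y_1,a)$ are colinear, and then apply Lemma \ref{colinlim} with the data $LIM(y_0,(x_i))$, $LIM(y_1,(x_i))$ and $\varphi(y_0,y_1)>0$ (again invoking that $(x_i)$ is not Cauchy) to conclude $LIM(a,(x_i))$, i.e. $a\in Y$. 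Thus $Y=L$ is a line, which is exactly the fourth possibility.

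The argument is essentially an assembly of the preceding lemmas, so there is no single hard computation; the main point to get right is the bookkeeping of hypotheses. The two crucial ingredients are that the failure of possibilities 1 and 2 supplies two genuinely distinct points of $Y$, hence $\varphi(y_0,y_1)>0$, the nondegeneracy needed to run Lemma \ref{colinlim}, and that the failure of possibility 3 supplies the non-Cauchy hypothesis shared by both Corollary \ref{limcolin} and Lemma \ref{colinlim}. Without both of these neither inclusion $Y\subseteq L$ nor $L\subseteq Y$ would be available, which is precisely why those alternatives appear as separate possibilities in the statement.
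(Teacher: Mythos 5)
Your proposal is correct and follows essentially the same route as the paper: both reduce to the case of two distinct points in $Y$ with $(x_i)$ not Cauchy, use Corollary \ref{limcolin} to get condition \eqref{subline} for $Y$, and use Lemma \ref{colinlim} together with $\varphi(y_0,y_1)>0$ (from (SR)) for the crucial step showing nothing colinear with $y_0,y_1$ can lie outside $Y$. The only cosmetic difference is that you identify $Y$ with the unique line through $y_0,y_1$ via Lemma \ref{lineunique}, whereas the paper verifies the maximality clause of Definition \ref{def-colinear} directly; the two are equivalent.
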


\begin{proof}
Consider the subset $Y\subset X$ of points $y$ such that $LIM(y,(x_i))$ holds.
We may assume that there are two distinct points $y_1\neq y_2$ in $Y$, 
for otherwise one of the first two possibilities
would hold. Suppose that $(x_i)$ is not
Cauchy for $\varphi$; in particular, Lemmas \ref{dlim}, \ref{colinlim} and Corollary
\ref{limcolin} apply.  

If $y,y^{\prime},y^{\prime\prime}$ are any three points in $Y$, then by 
Corollary \ref{limcolin}, they are colinear. Thus $Y$ is a subset satisfying 
Condition \eqref{subline} in the definition of a line; 
to show that it is a line, we have to show
that it is a maximal such subset. 

Suppose $Y\subset Y_1$ and $Y_1$ also satisfies
\eqref{subline}. 
Since $y_1\neq y_2$, and we are assuming that $\varphi$ satisfies
strict reflexivity (SR), we have $\varphi (y_1,y_2)\neq 0$. By Lemma \ref{positive},
$\varphi (y_1,y_2)>0$. If $y\in Y_1$ then by \eqref{subline},
$d(y,y_1,y_2)=0$. By Lemma \ref{colinlim}, 
$y$ must also satisfy $LIM(y,(x_i))$, thus $y\in Y$. This shows that
$Y_1\subset Y$, giving maximality of $Y$. Thus, $Y$ is a line. 
\end{proof}

The following proposition shows that the case when $(x_i)$ Cauchy has to be included in the
statement of the theorem. 

\begin{proposition}
\label{dcontinuous}
If $(x_i)$, $(y_j)$ and $(z_k)$ are Cauchy sequences, then the sequence $d(x_i,y_j,z_k)$
is Cauchy in the sense that for any $\epsilon >0$ there exists $M$ such that for $i,j,k,p,q,r\geq M$ then $|d(x_i,y_j,z_k) -d(x_p,y_q,z_r)|<\epsilon$. In particular
$d$ is continuous. If $(x_i)$ is Cauchy then $LIM(y,(x_i))$ holds for any point $y\in X$.
\end{proposition}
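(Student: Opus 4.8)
The plan is to derive all three assertions from Lemma~\ref{dphi}, which says that changing a single argument of $d$ from $x$ to $y$ alters its value by at most $2\varphi(x,y)$; by the permutation symmetry (Sym) this Lipschitz-type bound holds in each of the three slots. The first step is to upgrade this one-variable estimate to a three-variable version by changing the arguments one at a time and applying the ordinary triangle inequality for real numbers.

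Concretely, for the Cauchy assertion I would insert two intermediate terms and telescope:
\[
|d(x_i,y_j,z_k)-d(x_p,y_q,z_r)| \leq 2\varphi(x_i,x_p)+2\varphi(y_j,y_q)+2\varphi(z_k,z_r),
\]
where each summand comes from one application of Lemma~\ref{dphi}. Given $\epsilon>0$, the Cauchy property of the three sequences for $\varphi$ supplies an $M$ such that each of $\varphi(x_i,x_p)$, $\varphi(y_j,y_q)$, $\varphi(z_k,z_r)$ is below $\epsilon/6$ once all indices exceed $M$, so the right-hand side is below $\epsilon$. This proves that $d(x_i,y_j,z_k)$ is a Cauchy sequence of reals. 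Continuity of $d$ follows from the same estimate specialized to convergent sequences: if $x_i\to x$, $y_j\to y$, $z_k\to z$ then $|d(x_i,y_j,z_k)-d(x,y,z)|\leq 2\varphi(x_i,x)+2\varphi(y_j,y)+2\varphi(z_k,z)\to 0$.

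For the final assertion about $LIM$, the key observation is that axiom (Z) gives $d(y,x_i,x_i)=0$. Applying Lemma~\ref{dphi} in the third slot to replace $x_i$ by $x_j$ yields
\[
d(y,x_i,x_j) = |d(y,x_i,x_j)-d(y,x_i,x_i)| \leq 2\varphi(x_i,x_j).
\]
Since $(x_i)$ is Cauchy for $\varphi$, the right-hand side is below any prescribed $\epsilon$ once $i,j$ are large enough, which is exactly the defining condition $LIM(y,(x_i))$, and this holds for every fixed $y\in X$.

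There is no genuine obstacle here: the entire proposition is a formal consequence of the single-variable Lipschitz bound of Lemma~\ref{dphi} together with the definition of Cauchy sequence for $\varphi$. The only points requiring a little care are bookkeeping the factor $2$ in each slot (hence splitting $\epsilon$ into sixths in the Cauchy estimate) and remembering to invoke (Z) to make $d(y,x_i,x_i)$ vanish in the last part; the nonnegativity of $d$ from Lemma~\ref{positive} then lets us drop the absolute value there.
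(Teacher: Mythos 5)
Your proof is correct and follows essentially the same route as the paper: both telescope through the three slots using the Lipschitz bound of Lemma~\ref{dphi} (splitting $\epsilon$ into sixths) to get the Cauchy property and continuity of $d$. For the final assertion the paper merely remarks that $d(y,x_i,x_j)$ is Cauchy in $i,j$; your explicit use of (Z) to kill the diagonal term $d(y,x_i,x_i)$ makes the same idea slightly more transparent but is not a different argument.
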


\begin{proof}
For given $\epsilon$, by the Cauchy condition there is $M$ such that 
for $i,j,k,p,q,r \geq M$ we have $\varphi (x_i,x_p)<\epsilon /6$, 
$\varphi (y_j,y_q)<\epsilon /6$,  and $\varphi (z_k,z_r)<\epsilon /6$.
Then by Lemma \ref{dphi}
\[
|d(x_i,y_j,z_k) -d(x_i,y_j,z_r)|\leq  \epsilon /3,  
\]
\[
|d(x_i,y_j,z_r) -d(x_i,y_q,z_r)|\leq  \epsilon /3 ,  
\]
and
\[
|d(x_i,q,z_r) -d(x_p,y_q,z_r)|\leq  \epsilon /3 .
\]
These give the Cauchy property
\[
|d(x_i,y_j,z_k) -d(x_p,y_q,z_r)|\leq  \epsilon .
\]
This shows in particular that $d$ is continuous. Suppose $(x_i)$ is Cauchy and
$y$ is any point. Then the sequence $d(y,x_i,x_j)$ is Cauchy in the above sense in the
two variables $i,j$, which gives exactly the condition $LIM (y,(x_i))$. 
\end{proof}

We say that a sequence $(x_i)$ is \emph{tri-Cauchy} if 
\begin{equation*}
\forall \epsilon > 0 ,\;\; \exists m_{\epsilon}, \;\; i,j,k \geq
m_{\epsilon} \Rightarrow d(x_i,x_j,x_k) < \epsilon .
\end{equation*}

\begin{lemma}
\label{accumulation}
Suppose $(x_i)$ is a tri-Cauchy sequence, and  $y\in X$ is an accumulation
point of the sequence with respect to the distance 
$\varphi$. Then $LIM(y,(x_i))$.
\end{lemma}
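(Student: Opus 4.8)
The plan is to use the tetrahedral inequality (Tetr) to compare $d(y,x_i,x_j)$ with a value of $d$ taken along the approximating subsequence, splitting the resulting error into one part controlled by the accumulation hypothesis and one part controlled by the tri-Cauchy hypothesis. First I would unwind what it means for $y$ to be an accumulation point of $(x_i)$ for the distance $\varphi$: there is an increasing sequence of indices $i(1)<i(2)<\cdots$ with $\varphi(y,x_{i(n)})\to 0$ as $n\to\infty$.

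Fix $\epsilon>0$. Using tri-Cauchy, I would first choose $m$ so that $d(x_i,x_j,x_k)<\epsilon/2$ whenever $i,j,k\geq m$. Because the subsequence indices satisfy $i(n)\to\infty$ while $\varphi(y,x_{i(n)})\to 0$, I can then select a single index $n$ with $i(n)\geq m$ and $\varphi(y,x_{i(n)})<\epsilon/4$, and set $w:=x_{i(n)}$ as a fixed auxiliary point. The central estimate is the tetrahedral inequality applied with $w$:
\[
d(y,x_i,x_j)\leq d(y,x_i,w)+d(x_i,x_j,w)+d(y,x_j,w).
\]
By (Sym) and the definition of $\varphi$ as a supremum, each of the first and third terms is at most $\varphi(y,w)$ (for instance $d(y,x_i,w)=d(y,w,x_i)\leq\varphi(y,w)$), while the middle term is $<\epsilon/2$ as soon as $i,j\geq m$, since also $i(n)\geq m$. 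Hence for all $i,j\geq m$,
\[
d(y,x_i,x_j)\leq 2\varphi(y,w)+d(x_i,x_j,w)<2\cdot\frac{\epsilon}{4}+\frac{\epsilon}{2}=\epsilon,
\]
which is exactly the statement $LIM(y,(x_i))$ with $a_\epsilon=m$.

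The one point that requires genuine care, and the step I would flag as the main obstacle, is the order of quantifiers. The auxiliary point $w$ must be chosen \emph{once}, before quantifying over $i,j$, yet it must simultaneously be $\varphi$-close to $y$ (to control $d(y,x_i,w)$ and $d(y,x_j,w)$) and carry a large index (so that the tri-Cauchy bound applies to the triple $(x_i,x_j,w)$). These two requirements are compatible precisely because the approximating subsequence converges to $y$ while its indices tend to infinity; this is where both hypotheses are genuinely used together. Once $w$ is fixed in this way, the threshold $a_\epsilon=m$ no longer depends on $i,j$, and the estimate is uniform over all $i,j\geq a_\epsilon$, as the definition of $LIM$ demands.
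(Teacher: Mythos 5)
Your proof is correct and follows essentially the same route as the paper: both apply (Tetr) with an auxiliary point taken from the approximating subsequence and bound the two terms involving $y$ by $2\varphi(y,x_{u(k)})$ while the tri-Cauchy hypothesis controls the remaining term. Your explicit handling of the quantifier order (fixing $w$ once, with large index and small $\varphi$-distance to $y$ simultaneously) is a careful spelling-out of what the paper states more informally as ``both terms on the right become small.''
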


\begin{proof}
The condition that $y$ is an accumulation point means that there exists a
subsequence $(x_{u(k)})$ 
such that $(x_{u(k)})\rightarrow y$ with respect to the distance 
$\varphi$.
We have by (Tetr) 
\begin{eqnarray*}
d(y,x_i,x_j)& \leq & d(x_{u(k)},x_i,x_j) + d(y,x_{u(k)},x_j) + d(y,x_i,x_{u(k)})\\
& \leq & d(x_{u(k)},x_i,x_j) + 2 \varphi (y,x_{u(k)}),
\end{eqnarray*}
and both terms on the right become small, for $i,j$ big in the original
sequence and $k$ big in the subsequence. Hence $d(y,x_i,x_j)\rightarrow 0$
as $i,j\gg 0$, which is exactly the condition $LIM(y,(x_i))$.
\end{proof}

We say that $(X,d)$ is \emph{tri-complete} if, for any
tri-Cauchy sequence,  the set $Y$ of
points satisfying $LIM(y,(x_i))$ is nonempty. By Theorem \ref{possibilities},
$Y$ is either a single point, a line, or (in case $(x_i)$ is Cauchy) all of $X$. 

\begin{lemma}
\label{tricauchypossibilities}
Suppose  $(X,\varphi )$ is compact.  Then it is tri-complete, 
and for any tri-Cauchy sequence $(x_i)$ we have one of the following two possibilities:
\newline
---$(x_i)$ has a limit; or
\newline
---the subset $Y$ of points $y$ with $LIM(y,(x_i))$ is a line.
\end{lemma}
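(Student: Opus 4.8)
The plan is to combine compactness with the classification already obtained in Theorem \ref{possibilities}, the only genuinely new ingredient being an elementary fact about sequences in a (sequentially) compact space. The dichotomy will emerge from a split according to whether $(x_i)$ is Cauchy for $\varphi$.

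First I would dispose of tri-completeness and simultaneously produce a point of $Y$. Since $(X,\varphi)$ is compact, the tri-Cauchy sequence $(x_i)$ admits at least one $\varphi$-accumulation point $y_0$. By Lemma \ref{accumulation}, an accumulation point of a tri-Cauchy sequence automatically satisfies $LIM(y_0,(x_i))$, so $y_0\in Y$ and in particular $Y\neq\emptyset$; this is exactly the assertion that $X$ is tri-complete.

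Next I would split on whether $(x_i)$ is Cauchy for $\varphi$. If it is, then the accumulation point $y_0$ is in fact a limit: given $\epsilon>0$, choose $N$ with $\varphi(x_i,x_j)<\epsilon$ for $i,j\geq N$, pick a term $x_{u(k)}$ of the convergent subsequence with $u(k)\geq N$ and $\varphi(x_{u(k)},y_0)<\epsilon$, and apply the asymmetric triangle inequality (AT) (with constant $C=2$ from Lemma \ref{trianglevariants}) to obtain $\varphi(x_i,y_0)\leq \varphi(x_i,x_{u(k)})+2\varphi(x_{u(k)},y_0)<3\epsilon$ for all $i\geq N$. Hence $(x_i)$ has a limit, which is the first alternative.

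If $(x_i)$ is not Cauchy, then Theorem \ref{possibilities}, together with the fact that $Y\neq\emptyset$ rules out its first alternative, forces $Y$ to be either a single point or a line. If $Y$ is a line we are in the second alternative and done. The remaining case, $Y=\{y_0\}$ a single point, is where the real work lies and is what I expect to be the main obstacle: one must upgrade $LIM(y_0,(x_i))$ — which only controls $d(y_0,x_i,x_j)$ — to genuine $\varphi$-convergence $x_i\to y_0$. I would do this by first noting that every $\varphi$-accumulation point $z$ of $(x_i)$ lies in $Y$ by Lemma \ref{accumulation}, whence $z=y_0$; so $y_0$ is the unique accumulation point. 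Then I would invoke the elementary fact that in a sequentially compact space a sequence with a unique accumulation point converges to it: were this false, some subsequence would remain at $\varphi$-distance $\geq\epsilon$ from $y_0$, yet by compactness that subsequence would have an accumulation point, necessarily equal to $y_0$, contradicting the separation. This gives $x_i\to y_0$, again the first alternative, completing the dichotomy.
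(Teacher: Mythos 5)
Your proposal is correct and follows essentially the same route as the paper: compactness plus Lemma \ref{accumulation} gives a point of $Y$ (hence tri-completeness and the exclusion of the first case of Theorem \ref{possibilities}), and the singleton-$Y$ case is resolved by exactly the paper's argument that a sequence in a sequentially compact space with a unique accumulation point must converge to it. The only difference is cosmetic — you split on Cauchy versus non-Cauchy up front rather than walking through the cases of Theorem \ref{possibilities} — and your observation that the non-Cauchy singleton case in fact forces convergence (hence Cauchyness) matches the paper's own remark.
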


\begin{proof}
Suppose $(x_i)$ is a tri-Cauchy sequence. By compactness there is
at least one point of accumulation, so the set $Y$
of points $y$ with $LIM(y,(x_i))$ is nonempty 
by Lemma \ref{accumulation}. This rules out the first possibility of Theorem \ref{possibilities}. 

Suppose $Y$ consists of a single point $y$. We claim then that
$x_i\rightarrow y$. 
Suppose not: then there is a subsequence which doesn't contain $y$ in its
closure, but since $X$ is compact after going to a further subsequence we
may assume that the subsequence has a limit point $y^{\prime}\neq y$. But
again by Lemma \ref{accumulation}, 
we would have $LIM(y^{\prime},(x_i))$, a contradiction.
So in this case, the sequence $(x_i)$ is Cauchy for $\varphi$ and has $y$ as
its limit; thus we are also in the situation of the third possibility.
Note however that, since $(x_i)$ is Cauchy, the set of points $Y$ consists of
all of $X$ by Proposition \ref{dcontinuous}, so the second possibility doesn't occur unless
$X$ is a singleton. 

From Theorem \ref{possibilities} the remaining cases are that $(x_i)$ is Cauchy,
in which case it has a limit by compactness; or that $Y$ is a line.
\end{proof}

\section{Some examples}
\label{example}

The classic example of a $2$-metric is the function on ${\mathbb{R}} ^2$
defined by setting $d(x,y,z)$ to be the area of the triangle spanned by $x,y,z$.
Before discussing this example we look at a related example on 
${\mathbb{RP}} ^2$ which is also very canonical. 

\subsection{A projective area function}

Let $X:= S^2:= \{ (x_1,x_2,x_3)\in {\mathbb{R}} ^3, \;\; x_1^2 + x_2^2 +
x_3^2 = 1\}$. Define the function $d(x,y,z)$ by taking the absolute value of
the determinant of the matrix containing $x,y,z$ as column vectors: 
\begin{equation*}
d\left( \left[
\begin{array}{c}
x_1 \\ 
x_2 \\ 
x_3
\end{array}
\right] , \left[
\begin{array}{c}
y_1 \\ 
y_2 \\ 
y_3
\end{array}
\right] , \left[
\begin{array}{c}
z_1 \\ 
z_2 \\ 
z_3
\end{array}
\right] \right) := \left| \mathrm{det} \left[ 
\begin{array}{ccc}
x_1 & y_1 & z_1 \\ 
x_2 & y_2 & z_2 \\ 
x_3 & y_3 & z_3
\end{array}
\right] \right| .
\end{equation*}
This has appeared in Example 2.2 of \cite{MiczkoPalczewski}. 

\begin{proposition}
\label{determinantexample}
This function satisfies axioms (Sym), (Tetr), (Z), (B), (Trans).
\end{proposition}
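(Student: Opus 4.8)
The plan is to dispatch the five axioms in order of difficulty, with the two substantive ones, (Tetr) and (Trans), both flowing from a single Cramer-type expansion identity in $\rr^3$. Throughout write $\det[a\,b\,c]$ for the determinant of the matrix whose columns are $a,b,c$, so that $d(x,y,z)=|\det[x\,y\,z]|$. The three easy axioms come first. Symmetry (Sym) is immediate, since permuting the columns only changes the sign of the determinant, which the absolute value erases. Axiom (Z) holds because $d(a,b,b)$ is the absolute value of a determinant with two equal columns, hence $0$. For boundedness (B) I would invoke Hadamard's inequality $|\det[x\,y\,z]|\leq \|x\|\,\|y\|\,\|z\|$, which equals $1$ because $x,y,z\in S^2$ are unit vectors.

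The engine for the remaining two axioms is the identity, valid for all vectors $u,a,b,c\in\rr^3$,
\[
u\,\det[a\,b\,c] \;=\; \det[u\,b\,c]\,a + \det[a\,u\,c]\,b + \det[a\,b\,u]\,c,
\]
which is Cramer's rule for expanding $u$ in the basis $\{a,b,c\}$ multiplied through by $\det[a\,b\,c]$. Since both sides are multilinear polynomials in $u,a,b,c$, it suffices to verify it on standard basis vectors, after which it holds with no nondegeneracy hypothesis. For (Tetr) I would set $u=x$ and take Euclidean norms. On the left, $\|x\|\,|\det[a\,b\,c]| = |\det[a\,b\,c]|$ because $\|x\|=1$; on the right, the triangle inequality for $\|\cdot\|$ together with $\|a\|=\|b\|=\|c\|=1$ bounds the norm by $|\det[x\,b\,c]| + |\det[a\,x\,c]| + |\det[a\,b\,x]|$. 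Re-ordering columns inside each absolute value, these are $d(b,c,x)$, $d(a,c,x)$, $d(a,b,x)$, so $d(a,b,c)\leq d(a,b,x)+d(b,c,x)+d(a,c,x)$, which is exactly (Tetr).

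For (Trans) I would use the same identity, but now isolating $\det[a\,b\,x]$ as the multiplier by expanding $c$ in the basis $\{a,b,x\}$:
\[
c\,\det[a\,b\,x] = \det[c\,b\,x]\,a + \det[a\,c\,x]\,b + \det[a\,b\,c]\,x,
\]
and then dotting both sides with the normal vector $n:=x\times y$. The crucial point is that $x\cdot n = x\cdot(x\times y)=0$, so the last term is annihilated, leaving
\[
(c\cdot n)\,\det[a\,b\,x] = \det[c\,b\,x]\,(a\cdot n) + \det[a\,c\,x]\,(b\cdot n).
\]
Taking absolute values and bounding the two minors $|\det[c\,b\,x]|,|\det[a\,c\,x]|\leq 1$ by (B) gives $|c\cdot n|\,|\det[a\,b\,x]| \leq |a\cdot n| + |b\cdot n|$. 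Since $|a\cdot n|=|\det[a\,x\,y]|=d(a,x,y)$, and likewise $|b\cdot n|=d(b,x,y)$, $|c\cdot n|=d(c,x,y)$, this reads $d(a,b,x)\,d(c,x,y)\leq d(a,x,y)+d(b,x,y)$, which is precisely (Trans). No case distinction is needed: where $\det[a\,b\,x]=0$ the inequality is trivial, and elsewhere the polynomial identity does all the work.

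The one step that is not purely formal is the choice of which vector to isolate in the Cramer identity, and for (Trans) the observation that dotting against $x\times y$ kills the unwanted $\det[a\,b\,c]\,x$ term; once these are in place, everything reduces to the triangle inequality and Hadamard's bound. I would also remark in passing that (N) is deliberately absent from the list, since antipodal pairs $a,-a$ on $S^2$ give $d(a,-a,c)=0$ for every $c$; the nondegeneracy is recovered only after passing to $\mathbb{RP}^2$.
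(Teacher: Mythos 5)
Your proof is correct, and for (Trans) it takes a genuinely different route from the paper. For the three easy axioms and for (Tetr) you and the paper do essentially the same thing: the paper also proves (Tetr) by writing $a=\alpha x+\beta y+\gamma z$ via Cramer's rule and using $1=\|a\|\leq |\alpha |+|\beta |+|\gamma |$; your only improvement is to package Cramer's rule as the multilinear identity $u\det [a\, b\, c]=\det [u\, b\, c]\, a+\det [a\, u\, c]\, b+\det [a\, b\, u]\, c$, which holds identically and so silently absorbs the degenerate case $\det [a\, b\, c]=0$ that the paper sets aside. The real divergence is in (Trans): the paper normalizes by an orthogonal transformation so that $x=e_1$ and $y=(u,v,0)$, computes $\sup _c d(c,x,y)=|v|$, $d(a,x,y)=|a_3v|$, $d(a,b,x)=|a_2b_3-a_3b_2|$, and finishes with the elementary bound $|a_2b_3-a_3b_2|\leq |a_3|+|b_3|$; you instead expand $c$ in the frame $\{ a,b,x\}$ and pair with $n=x\times y$, so that the unwanted term $\det [a\, b\, c]\, x$ dies by $x\cdot (x\times y)=0$, and then invoke Hadamard's bound on the two remaining minors. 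The two arguments are of comparable length; the paper's is more concrete and easy to verify by hand once the normal form is accepted, while yours is coordinate-free, explains structurally why the cross term disappears, and needs no reduction step or case analysis. Your closing remark that (N) is correctly omitted (antipodal points give $d(a,-a,c)=0$ for all $c$, fixed only on the quotient $\mathbb{RP}^2$) matches Remark 5.3 of the paper.
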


\begin{proof}
Invariance under permutations (Sym) comes from the corresponding fact for
determinants. Condition (Z) comes from vanishing of a determinant with two
columns which are the same. Condition (B) comes from the fact that the
determinant of a matrix whose columns have norm $1$, is in $[-1, 1]$. We
have to verify (Tetr) and (Trans).

For condition (Tetr), suppose given vectors $x,y,z\in S^2$ as above, and
suppose that $d(x,y,z)>0$ i.e. they are linearly independent. Suppose given
another vector $a=\left[
\begin{array}{c}
a_1 \\ 
a_2 \\ 
a_3
\end{array}
\right] $ too. Then the determinants $d(a,y,z)$, $d(x,a,y)$ and $d(x,y,a)$
are the absolute values of the numerators appearing in Cramer's rule. This
means that if we write 
\begin{equation*}
a=\alpha x + \beta y + \gamma z
\end{equation*}
then 
\begin{equation*}
\frac{d(a,y,z)}{d(x,y,z)}= | \alpha |,
\end{equation*}
\begin{equation*}
\frac{d(x,a,z)}{d(x,y,z)}= | \beta |,
\end{equation*}
and 
\begin{equation*}
\frac{d(x,y,a)}{d(x,y,z)}= | \gamma |.
\end{equation*}
Now by the triangle inequality in ${\mathbb{R}} ^3$ we have 
\begin{equation*}
1=\| a \| \leq | \alpha | + | \beta | + | \gamma |
\end{equation*}
which gives exactly the relation (Tetr).

To prove (Trans), notice that it is invariant under orthogonal transformations
of ${\mathbb{R}} ^3$ so we may assume that 
\begin{equation*}
x= \left[
\begin{array}{c}
1 \\ 
0 \\ 
0
\end{array}
\right] , \;\; y = \left[
\begin{array}{c}
u \\ 
v \\ 
0
\end{array}
\right] , \;\; u^2 + v^2 = 1.
\end{equation*}
In this case, $\sup _{c\in S^2} d(c,x,y)= |v|$, so we are
reduced to considering 
\begin{equation*}
a= \left[
\begin{array}{c}
a_1 \\ 
a_2 \\ 
a_3
\end{array}
\right] , \;\; b = \left[
\begin{array}{c}
b_1 \\ 
b_2 \\ 
b_3
\end{array}
\right] .
\end{equation*}
Now $d(a,x,y)= |a_3v|$ and $d(b,x,y)=|b_3v|$, so we have to show that 
\begin{equation*}
d(a,b,x) |v| \leq |a_3 v| + |b_3v|.
\end{equation*}
But $d(a,b,x)= |a_2b_3 - a_3b_2|$ so this inequality is true (since 
$|a_2|\leq 1$ and $|b_2|\leq 1$). This completes the proof of (Trans).
\end{proof}

\begin{corollary}
If $X\subset S^2$ then with the same function $d(x,y,z)$, it still satisfies
the axioms.
\end{corollary}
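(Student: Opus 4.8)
The plan is straightforward, because each axiom verified in Proposition \ref{determinantexample} is a \emph{universally quantified} condition: (Sym), (Tetr), (Z), (B), and (Trans) each assert that some equality or inequality involving $d$ holds for all choices of the points appearing in them. First I would observe that the function on $X$ is simply the restriction of the function on $S^2$: since $X\subset S^2$, the value $d(x,y,z)$ for $x,y,z\in X$ is computed by exactly the same determinant, hence is unchanged.

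Then for each of the five axioms the argument is identical. If all the quantified points are required to lie in $X$, then because $X\subset S^2$ they are in particular points of $S^2$, where the corresponding statement is already known to hold by the Proposition; thus the inequality or equality holds a fortiori when the points range only over $X$. For instance (Tetr) on $X$ reads $d(a,b,c)\leq d(a,b,x)+d(b,c,x)+d(a,c,x)$ for all $a,b,c,x\in X\subset S^2$, which is a special case of (Tetr) on $S^2$; the same reasoning applies verbatim to (Sym), (Z), (B), and (Trans). There is no computation to redo, since the verifications in the Proposition never used surjectivity onto $S^2$ or any special feature of the ambient sphere beyond membership.

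The only point requiring comment, and the sole place where one might expect an obstacle, is that axiom (N) is \emph{not} of this universally quantified form: it asserts the existence of a point $c$ with $d(a,b,c)\neq 0$, and an existential statement of this kind is not inherited by passing to a subset. This is precisely why the Proposition, and hence the present Corollary, does not claim (N). Indeed (N) already fails on $S^2$ itself whenever $a$ and $b$ are linearly dependent (that is, $a=\pm b$), since then the matrix has two parallel columns and $d(a,b,c)=0$ for every $c$; it will also fail for any $X$ contained in a single great circle, where every triple is colinear. Since (N) is not among the axioms in question, no obstacle remains, and the proof reduces to the elementary restriction observation above.
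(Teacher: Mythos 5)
Your argument is correct and matches the paper's intent exactly: the paper gives no written proof for this corollary precisely because each of (Sym), (Tetr), (Z), (B), (Trans) is universally quantified and hence inherited by restriction to any subset $X\subset S^2$. Your remark that (N) is existential, is not claimed by Proposition \ref{determinantexample}, and indeed fails for antipodal pairs is accurate and consistent with Remark \ref{antipode}.
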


\ \hfill $\Box$

The ``lines'' for the function $d$ defined above, are the great circles or
geodesics on $S^2$.

\begin{remark}
\label{antipode}
The distance function $\varphi$ is not strictly reflexive in this example,
indeed the associated equivalence relation identifies antipodal points. 
The quotient $S^2/\sim$ is the real projective plane. The corresponding function
on ${\mathbb{RP}} ^2$ is a bounded $2$-metric satisfying (Trans).
\end{remark}

\subsection{The Euclidean area function}

We next go back and consider the standard area function on Euclidean space, which we
denote by $\alpha $.

\begin{proposition}
\label{standardexample}
Let $U\subset {\mathbb{R}}^n$ be a ball of diameter $\leq 1$. 
Let $\alpha (x,y,z)$ be the area of the triangle spanned by $x,y,z\in U$.
Then $\alpha $ satisfies axioms (Sym), (Tetr), (Z), (N), (B), (Trans). 
\end{proposition}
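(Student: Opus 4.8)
The plan is to verify each of the six axioms for the Euclidean area function $\alpha$ on a ball $U\subset{\mathbb R}^n$ of diameter $\leq 1$. Several of these are immediate and I would dispatch them first: (Sym) holds because the area of a triangle does not depend on the labeling of its vertices; (Z) holds because a degenerate triangle with two coincident vertices has zero area; and (N) holds because, given $a\neq b$ in the ball, one can find a third point $c$ not on the line through $a$ and $b$, producing a nondegenerate triangle of positive area. For (B), I would bound the area of any triangle inscribed in a set of diameter $\leq 1$; the area is at most $\tfrac{1}{2}(\text{base})(\text{height})\leq\tfrac{1}{2}\cdot 1\cdot 1=\tfrac12\leq 1$, so the boundedness bound is comfortably satisfied (one could rescale, but it is already fine).

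For (Tetr), the clean approach is to reduce to a two-dimensional computation. Given $x,y,z,a$, all four points lie in the affine plane they span (or a common plane if degenerate); within a fixed plane, the signed area is an affine-linear function, and the key identity is that the oriented areas satisfy $[x,y,z]=[a,y,z]+[x,a,z]+[x,y,a]$ when orientations are taken consistently (this is the additivity of signed area under subdividing a triangle by an interior or exterior point $a$). Passing to absolute values and applying the triangle inequality for real numbers yields $\alpha(x,y,z)\leq\alpha(a,y,z)+\alpha(x,a,z)+\alpha(x,y,a)$, which is exactly (Tetr). I would phrase this via the barycentric/Cramer decomposition $a=\alpha' x+\beta' y+\gamma' z$ with $\alpha'+\beta'+\gamma'=1$ in the affine plane, so that the three area ratios are $|\alpha'|,|\beta'|,|\gamma'|$ and $|\alpha'|+|\beta'|+|\gamma'|\geq|\alpha'+\beta'+\gamma'|=1$, mirroring the argument already given for the determinant example in Proposition \ref{determinantexample}.

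The main obstacle is (Trans), and this is where the diameter hypothesis does real work. I must show $\alpha(a,b,x)\,\alpha(c,x,y)\leq\alpha(a,x,y)+\alpha(b,x,y)$ for all points. The natural strategy is to fix the segment $xy$ as a base and reduce to a planar picture by choosing coordinates in which $x$ and $y$ determine a horizontal axis; then $\alpha(a,x,y)=\tfrac12\|y-x\|\cdot h_a$ where $h_a$ is the distance from $a$ to the line $xy$, and similarly for $b$. The term $\alpha(c,x,y)\leq\varphi(x,y)$ can be replaced by its supremum over $c$, which is $\tfrac12\|y-x\|\cdot(\text{max height})$, attained at the point of the ball farthest from line $xy$. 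Meanwhile $\alpha(a,b,x)$ must be controlled by the heights $h_a,h_b$; the factor $\alpha(c,x,y)$ on the left, being proportional to $\|y-x\|$, is what lets the base length cancel against the heights on the right. I would expect the essential inequality to come down to comparing $\alpha(a,b,x)\cdot(\text{base}\cdot\text{height})$ with $(\text{base})\cdot(h_a+h_b)$, where after the cancellation the diameter bound $\leq 1$ guarantees the remaining linear-size factors stay below $1$, exactly as $|a_2|\leq 1$ and $|b_2|\leq 1$ were used in the determinant case. The delicate point is that in higher dimensions $a,b,x,y$ need not be coplanar, so I would first project $a$ and $b$ orthogonally onto the plane through $x,y$ perpendicular to nothing canonical — more carefully, I would bound $\alpha(a,b,x)$ using only the components of $a,b$ measured relative to the line $xy$, and check that projection can only decrease the relevant areas, thereby reducing the $n$-dimensional claim to the planar estimate established for the $S^2$ determinant model.
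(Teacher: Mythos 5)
Your treatment of (Sym), (Z), (N), (B) is fine, and your overall strategy for (Trans) --- normalize so that $x=0$ and $y=y_1e_1$, replace $\alpha (c,x,y)$ by its supremum over $c$, which is bounded by a multiple of $y_1=\| y-x\|$, and let that base length cancel against the bases in $\alpha (a,x,y)=\tfrac12 y_1\| \tilde{a}\|$ and $\alpha (b,x,y)=\tfrac12 y_1\| \tilde{b}\|$ --- is exactly the paper's. But your argument stops short of the one inequality that carries all the content: after the cancellation you must show $\alpha (a,b,x)\leq \| \tilde{a}\| +\| \tilde{b}\|$ (up to the bookkeeping of factors of $2$), where $\tilde{a},\tilde{b}$ are the components of $a,b$ orthogonal to the line $xy$. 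You only write that you ``would expect'' this to come out, and the route you propose --- project $a,b$ onto a plane and invoke ``the planar estimate established for the $S^2$ determinant model'' --- does not work: the computation in Proposition \ref{determinantexample} concerns unit vectors and a linear (not affine) determinant, there is no projection carrying one situation to the other, and in any case orthogonal projection \emph{decreases} the right-hand areas $\alpha (a,x,y)$, $\alpha (b,x,y)$, which makes the target inequality harder rather than easier. The paper closes this gap directly: writing $a=a_1e_1+\tilde{a}$ and $b=b_1e_1+\tilde{b}$ (with $x=0$), one has $2\alpha (a,b,x)=\| a\wedge b\| =\| a_1e_1\wedge \tilde{b}-b_1e_1\wedge \tilde{a}+\tilde{a}\wedge \tilde{b}\| \leq \| \tilde{a}\| +\| \tilde{b}\| +\| \tilde{a}\| \, \| \tilde{b}\|$, and the diameter hypothesis gives $|a_1|,|b_1|,\| \tilde{a}\| ,\| \tilde{b}\| \leq 1$, which suffices. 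This short wedge-product expansion is the missing step; without it (or an equivalent) the verification of (Trans) is incomplete.

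A smaller slip concerns (Tetr): you assert that $x,y,z,a$ ``all lie in the affine plane they span,'' but for $n\geq 3$ four points generically span a three-dimensional affine subspace, so the barycentric decomposition $a=\alpha 'x+\beta 'y+\gamma 'z$ with $\alpha '+\beta '+\gamma '=1$ need not exist. The standard fix is to replace $a$ by its orthogonal projection $a'$ onto the plane of $x,y,z$; projection does not increase area, so $\alpha (a',y,z)\leq \alpha (a,y,z)$ and similarly for the other two terms, and the planar case applied to $a'$ then yields (Tetr) for $a$. (The paper dismisses (Tetr) as classical, so supplying a proof is welcome, but it needs this extra reduction.)
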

\begin{proof}
Conditions (Sym), (Tetr), (Z), (N) are classical. Condition (B)
comes from the bound on the size of the ball. It remains to show Condition (Trans).
By invariance under orthogonal transformations, we may assume that $x=(0,\ldots ,0)$
and $y=(y_1,0,\ldots , 0)$ with $y_1>0$. 
Again by the bound on the size of the ball,
for any $c$ we have $\alpha (x,y,c)\leq y_1$. Thus, we need to show
\[
\alpha (x,a,b)y_1\leq \alpha (x,y,a)+\alpha (x,y,b).
\]
Write $a=(a_1,\ldots , a_n)$ and $b=(b_1,\ldots , b_n)$. Then
\[
\alpha (x,y,a)=y_1 \| \tilde{a}  \| 
\]
where $\tilde{a}= (0,a_2,\ldots , a_n)$ is the projection of $a$ on the plane 
orthogonal to $(xy)$. Similarly $d(x,y,b)=y_1 \| \tilde{b}  \| $.
To complete the proof it suffices to show that
$\alpha (x,a,b)\leq \| \tilde{a}  \|  + \| \tilde{b}  \| $.

Write $a=a_1e_1+\tilde{a}$ and $b=b_1e_1+\tilde{b}$ where
$e_1$ is the first basis vector. Then
\[
\alpha (x,a,b) = \frac{1}{2} \| a\wedge b \| 
\]
\[
= \frac{1}{2} \| a_1 e_1 \wedge \tilde{b}-b_1e_1\wedge \tilde{a} + 
\tilde{a}\wedge \tilde{b} \|
\]
\[
\leq \| \tilde{a}  \|  + \| \tilde{b}  \| 
\]
since $a_1\leq 1$, $b_1\leq 1$,   $\| \tilde{a}  \|\leq 1$, and $\| \tilde{b}  \|\leq 1$. 
\end{proof}

\subsection{Euclidean submanifolds}

If $U\subset {\mathbb{R}}^n$ is any subset contained in a ball of diameter $1$
but not in a single line,
then the function $d(x,y,z)$ on $U^3$  induced by the standard Euclidean area function on
${\mathbb{R}}^n$ is again a bounded isometric $2$-metric. This is interesting in case
$U$ is a patch inside a submanifold. The induced $2$-metric will have different
properties depending on the curvature of the submanifold. We consider 
$U\subset S^2\subset {\mathbb{R}}^3$ as an example. 

Choose a patch $U$ on $S^2$, contained in the neighborhood of the south pole of radius
$r<1/4$. Let $p:U\stackrel{\cong}{\rightarrow} V\subset {\mathbb{R}}^2$ be the vertical projection. 
Let $h(x,y,z):= \alpha (p^{-1}(x),p^{-1}(y),p^{-1}(z))$ be the $2$-metric on $V$ induced
from the standard Euclidean area function $\alpha $ of $S^2\subset {\mathbb{R}}^3$
via the projection $p$. This satisfies upper and lower bounds 
which mirror the convexity of the
piece of surface $U$:

\begin{lemma}
\label{s2bounds}
Keep the above notations for $h(x,y,z)$. 
Let $\alpha _2$ denote the standard area function on $V\subset {\mathbb{R}}^2$.
Define a function 
\[
\rho (x,y,z):= \| x-y\| \cdot \| x-z\| \cdot \| y-z\|  .
\]
Then there is a constant $C>1$ such that 
\begin{equation}
\label{convex}
\begin{array}{rcl}
\frac{1}{C}\left( \alpha _2 (x,y,z) + \rho (x,y,z) \right) 
& \leq & h(x,y,z) \\
h(x,y,z) & \leq &
C\left( \alpha _2 (x,y,z) + \rho (x,y,z) \right) 
\end{array}
\end{equation}
for $x,y,z\in V$. 
\end{lemma}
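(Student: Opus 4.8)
The plan is to work in explicit coordinates and reduce the geometric statement to a two–sided estimate on a single vector quantity. Parametrize the patch $U$ as the graph of $f(x)=-\sqrt{1-\|x\|^2}$ over $V$, so that the lift is $\Phi(x)=(x,f(x))\in{\mathbb R}^3$; since $V$ lies in the disc of radius $r<1/4$, the function $f$ is smooth with all derivatives bounded on $V$, and its Hessian $H(x)=\nabla^2 f(x)$ satisfies $H(0)=I$ and stays uniformly positive definite and close to the identity on $V$. Writing $u=y-x$ and $v=z-x$, the cross–product formula for the area of the Euclidean triangle $\Phi(x)\Phi(y)\Phi(z)$ yields the exact identity
\[
4\,h(x,y,z)^2 = 4\,\alpha_2(x,y,z)^2 + \|w\|^2,\qquad w:=f(x)(z-y)+f(y)(x-z)+f(z)(y-x),
\]
because the vertical component of the cross product is exactly $\pm 2\alpha_2$ while the horizontal part has norm $\|w\|$. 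In particular $\max(2\alpha_2,\|w\|)\le 2h\le 2\alpha_2+\|w\|$, so the Lemma follows once we establish $\|A\|\le C\alpha_2$ and $\tfrac1C\rho\le\|w-A\|\le C\rho$, where $A$ denotes the contribution to $w$ of the tangent plane to the graph at $x$.

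First I would isolate the \emph{linear part}: replacing $f$ by its first–order Taylor polynomial at $x$ contributes to $w$ exactly $A=(u\times v)\,\nabla f(x)^{\perp}$, whence $\|A\|=2\alpha_2\,\|\nabla f(x)\|\le C\alpha_2$ since $\nabla f$ is bounded on $V$. The remaining \emph{curvature part} $w-A$ is governed at leading order by the Hessian term $B=\tfrac12\big((v^{\!\top}Hv)\,u-(u^{\!\top}Hu)\,v\big)$, and the key elementary fact is that for $H=I$ one has $\big\|\,\|v\|^2u-\|u\|^2v\,\big\|=\|u\|\,\|v\|\,\|u-v\|=\rho$ \emph{exactly}. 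Combining this with a perturbation estimate $\big\|(v^{\!\top}Kv)u-(u^{\!\top}Ku)v\big\|\le C\|K\|\,\rho$ for symmetric $K$ — a homogeneous degree–three inequality in $(u,v)$, linear in $K$, which I would verify by a compactness argument on $\{\|u\|^2+\|v\|^2=1\}$ after checking that the only zero locus $u\parallel v$ of $\rho$ yields a finite ratio — together with the smallness of $H-I$ on $V$, gives $\tfrac1C\rho\le\|B\|\le C\rho$.

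It then remains to control the higher–order remainder $E=(w-A)-B$ and to assemble the pieces. For the assembly, the upper bound on $h$ is immediate from $2h\le 2\alpha_2+\|A\|+\|B\|+\|E\|$; for the lower bound I would use the dichotomy built into $2h\ge\max(2\alpha_2,\|w\|)$ together with $\|w\|\ge\|w-A\|-\|A\|\ge\tfrac1C\rho-C\alpha_2$ (using $\|w-A\|\ge\|B\|-\|E\|\ge\tfrac1{C}\rho$), which combined with $h\ge\alpha_2$ delivers $h\ge\tfrac1{C'}(\alpha_2+\rho)$ whether the triangle is ``fat'' ($\alpha_2\gtrsim\rho$) or ``thin'' ($\alpha_2\lesssim\rho$). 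The main obstacle is exactly the uniform control of $E$ (and of $w-A$) near the three \emph{degenerate configurations} where two of the points coincide: there both $w-A$ and $\rho$ vanish, and a naive Taylor remainder centred at $x$ produces a spurious blow-up at whichever coincidence is not $x$ (the bound degrades as $y\to z$). The remedy is to exploit that $w$, hence $w-A$ and $E$, is \emph{antisymmetric} under permutations of $x,y,z$ and so vanishes on each coincidence locus; concretely I would factor out the pairwise distances symmetrically — relabelling so the estimate is carried out along the two longest edges, or passing to the compact shape space $\{\|u\|^2+\|v\|^2=1\}$ and arguing by scaling and continuity — to obtain $\|E\|\le C\rho\cdot\max(\|u\|,\|v\|)$, which is negligible against $\rho$ on the small patch and completes both inequalities.
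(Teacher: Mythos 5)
Your proof is essentially correct, but it follows a genuinely different route from the paper's. The paper argues synthetically: the three lifted points lie on the circle $S^2\cap P$ cut out by their common plane $P$, so the classical formula (area $=abc/4R$ for a triangle inscribed in a circle of radius $R$) gives $h\geq \rho /C$ directly, while the vertical projection having Jacobian $\leq 1$ gives $h\geq \alpha_2$; the upper bound comes from a dichotomy on the vertical component of the normal to $P$ (nearly horizontal plane $\Rightarrow h\leq C\alpha_2$; nearly vertical plane through a patch near the south pole $\Rightarrow$ the circle $S^2\cap P$ has radius bounded below, so $h\leq C\rho$). Your approach instead works in the graph coordinates $f(x)=-\sqrt{1-\|x\|^2}$ and rests on the exact Pythagorean identity $4h^2=4\alpha_2^2+\|w\|^2$ with $w=f(x)(z-y)+f(y)(x-z)+f(z)(y-x)$, which I have checked is correct, as are the computations $\|A\|=2\alpha_2\|\nabla f(x)\|$ and, for $H=I$, $\|\,\|v\|^2u-\|u\|^2v\,\|=\|u\|\|v\|\|u-v\|=\rho$; the perturbation bound even has a one-line proof via $(v^{\top}Kv)u-(u^{\top}Ku)v=((v-u)^{\top}Kv)u+(u^{\top}K(v-u))v$, giving constant $2\|K\|$. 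The paper's argument is shorter and exploits the sphere's special geometry; yours is longer but more robust — it would establish the convexity bound \eqref{convex} for the graph of any $C^3$ function with Hessian uniformly close to a definite matrix, which is the generality actually used later in Proposition \ref{s2maps}.

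Two small corrections to the remainder step, which you rightly identify as the main obstacle. First, the relabelling should put the base point of the Taylor expansion at the vertex \emph{opposite the longest side}, so that the two edges you expand along are the two \emph{shorter} ones (your text says ``longest''): one needs $\max (\|u\| ,\|v\| )\leq \|u-v\|$ in order to get $\|u\|\|v\|(\|u\|^2+\|v\|^2)\leq 2\rho \max (\|u\|, \|v\| )$, and this holds precisely when $\|u-v\|=\|y-z\|$ is the longest side; with the base point at the common vertex of the two longest edges the bound fails as $y\to z$. This is harmless since $h$, $\alpha_2$, $\rho$ are symmetric and the bounds on $A$ and $B$ hold uniformly for any choice of base point in $V$. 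Second, the alternative ``shape space'' argument does not apply directly to $E$, since $E$ involves the non-polynomial remainder of $f$ and is not homogeneous under $(u,v)\mapsto (tu,tv)$; only the relabelling (or a manifestly symmetric integral representation of $w$) closes this step.
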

\begin{proof}
If any two points coincide the estimate holds, so we may assume the three points
$x,y,z$
are distinct, lying on a unique plane $P$. 
In particular, they lie on a circle $S^2\cap P$ of radius $\leq 1$. A simple calculation
on the circle gives a bound of the form $h(x,y,z)\geq (1/C)\rho (x,y,z)$.
The vertical projection from $P$ to ${\mathbb{R}}^2$ has Jacobian determinant $\leq 1$
so $h(x,y,z)\geq \alpha _2(x,y,z)$. Together these give the lower bound. 

For the upper bound, consider the unit normal vector to $P$ and let $u_3$ be the
vertical component. If $|u_3|\geq 1/4$ then the plane is somewhat horizontal and $h(x,y,z)\leq C\alpha _2(x,y,z)$. If $|u_3|\leq 1/4$ then the plane is almost vertical,
and in view of the fact that the patch is near the south pole, the intersection $P\cap S^2$ 
is a circle of radius $\geq 1/4$. In this case $h(x,y,z)\leq C\rho (x,y,z)$. From these
two cases we get the required upper bound. 
\end{proof}

\section{Fixed points of a map}
\label{fixedpoints}

Suppose that $(X,\varphi )$ is \emph{compact} meaning that every sequence
has a convergent subsequence. Suppose $F:X\rightarrow X$ is a $d$-decreasing
map i.e. one with $d(F(x),F(y),F(z))\leq kd(x,y,z)$ for $0<k<1$.

Pick a point $x_0\in X$ and define the {\em sequence of iterates} inductively by 
$x_{i+1}:= F(x_i)$. 

\begin{corollary}
Suppose $(X,\varphi )$ is compact and $F$ is a $d$-decreasing map. Pick a
point $x_0$ and define the sequence of iterates $(x_i)$ with $x_{i+1}=F(x_i)$.
This sequence is tri-Cauchy; hence either it is Cauchy
with a unique limit point $y\in X$, or else the subset 
$Y\subset X$ of points $y$ with $LIM(y,(x_i))$ is a line.
\end{corollary}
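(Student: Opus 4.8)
The plan is to reduce the entire statement to Lemma \ref{tricauchypossibilities}, so that the only genuine work is to verify that the sequence of iterates $(x_i)$ is tri-Cauchy. Once that is established, compactness of $(X,\varphi)$ together with Lemma \ref{tricauchypossibilities} immediately yields the stated dichotomy: either $(x_i)$ has a limit (which is then Cauchy, and whose limit is unique by the strict reflexivity (SR) assumed throughout this part of the paper), or the subset $Y$ of points $y$ with $LIM(y,(x_i))$ is a line.

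To prove the tri-Cauchy property, I would first iterate the contractivity hypothesis $d(F(x),F(y),F(z))\leq k\,d(x,y,z)$ along all three coordinates simultaneously. Since $x_{i+1}=F(x_i)$, applying the hypothesis with $x=x_i$, $y=x_j$, $z=x_k$ gives $d(x_{i+1},x_{j+1},x_{k+1})\leq k\,d(x_i,x_j,x_k)$. Iterating this $\ell$ times, for any $\ell\leq \min(i,j,k)$ one obtains
\[
d(x_i,x_j,x_k)\leq k^{\ell}\,d(x_{i-\ell},x_{j-\ell},x_{k-\ell}).
\]

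The key step is to take $\ell=\min(i,j,k)$, so that one of the three shifted indices becomes $0$ and the remaining factor $d(x_{i-\ell},x_{j-\ell},x_{k-\ell})$ is bounded by $1$ via the boundedness axiom (B). This produces the clean estimate $d(x_i,x_j,x_k)\leq k^{\min(i,j,k)}$. Given $\epsilon>0$, since $k<1$ I would choose $m$ with $k^{m}<\epsilon$; then for all $i,j,k\geq m$ we have $\min(i,j,k)\geq m$, and hence $d(x_i,x_j,x_k)\leq k^{\min(i,j,k)}\leq k^{m}<\epsilon$, which is exactly the tri-Cauchy condition.

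I do not expect a serious obstacle here: the heart of the matter is the simple observation that the contraction factor $k^{\min(i,j,k)}$ can be pulled out uniformly, leaving a triple one of whose coordinates equals $x_0$, so that global boundedness controls it. The one point requiring a little care is that the three indices $i,j,k$ need not be equal, which is why only the common power $k^{\min(i,j,k)}$ can be extracted rather than a single shared exponent; but since $\min(i,j,k)\to\infty$ as all three indices grow, this suffices. The final appeal to Lemma \ref{tricauchypossibilities} then closes the argument, with uniqueness of the limit in the Cauchy case supplied by (SR).
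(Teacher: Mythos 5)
Your proposal is correct and follows essentially the same route as the paper: factor out a power of $k$ using the contractivity applied to all three coordinates, bound the remaining triple by $1$ via axiom (B) to get $d(x_i,x_j,x_k)\leq k^{m}$ for $i,j,k\geq m$, and then invoke Lemma \ref{tricauchypossibilities}. The only cosmetic difference is that you pull out $k^{\min(i,j,k)}$ so that one index reaches $0$, whereas the paper simply pulls out $k^{m}$ and uses the global bound directly; both are the same estimate.
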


\begin{proof}
Note first that the sequence $(x_i)$ is tri-Cauchy. If $m\leq i,j,k$ then
\[
x_i=F^m(x_{i-m}), \;\; x_j=F^m(x_{j-m}), \;\;x_k=F^m(x_{k-m}).
\]
Hence using the global bound (B),
\[
d(x_i,x_j,x_k)\leq k^md(x_{i-m},x_{j-m},x_{k-m})\leq k^m.
\]
As $0<k<1$, for any $\epsilon$ there exists $m$ such that $k^m<\epsilon$; this gives
the tri-Cauchy property of the sequence of iterates. 

Then by Lemma \ref{tricauchypossibilities}, 
either $(x_i)\rightarrow y$ or else the set $Y$ of points $y$ with
$LIM(y,(x_i))$ is a line. 
\end{proof}

\begin{theorem}
\label{fixptline}
Suppose $X$ is nonempty and
$d$ is a bounded transitive $2$-metric (i.e. one satisfying (B) and (Trans)), 
such that $(X,\varphi )$ is compact. Suppose $F$ is a $d$-decreasing map
for a constant $0<k<1$. Then, either $F$ has a fixed point, or there is a line $Y$ fixed in
the sense that $F(Y)\subset Y$ and $Y$ is the only line containing $F(Y)$.
\end{theorem}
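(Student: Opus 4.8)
The plan is to run the sequence of iterates and feed it into the preceding corollary, then split according to which alternative occurs. Fix any $x_0\in X$ and set $x_{i+1}=F(x_i)$. The corollary just proved shows this sequence is tri-Cauchy, and that either it is Cauchy with a limit $y$, or the set $Y$ of points satisfying $LIM(y,(x_i))$ is a line. I would treat these two alternatives separately, producing in each case either a fixed point or a fixed line.

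First I would record the single computation that drives everything: since $x_i=F(x_{i-1})$, contractivity transports the $LIM$ property forward. Indeed, if $LIM(y,(x_i))$ holds and $i,j$ are large, then $d(F(y),x_i,x_j)=d(F(y),F(x_{i-1}),F(x_{j-1}))\le k\,d(y,x_{i-1},x_{j-1})$, so $LIM(F(y),(x_i))$ holds as well. In the line alternative this says precisely $F(Y)\subset Y$. Now either $F$ is constant on $Y$, say $F\equiv p$ there; then $p=F(y)\in F(Y)\subset Y$ and $F(p)=p$, a fixed point. Or $F(Y)$ contains two distinct points, and then Lemma \ref{lineunique} shows $Y$ is the unique line through those two points, hence the unique line containing $F(Y)$: a fixed line.

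For the Cauchy alternative $x_i\to y$ I cannot simply invoke continuity of $F$, since $d$-decrease does not control $\varphi(F\cdot,F\cdot)$ off the image of $F$; this is the main obstacle. Instead I would extract a colinearity statement. For every $w$, contractivity gives $d(F(x_i),F(y),F(w))\le k\,d(x_i,y,w)$. By Lemma \ref{dphi} and $x_i\to y$, the right-hand side tends to $d(y,y,w)=0$ by (Z), while on the left $F(x_i)=x_{i+1}\to y$ lets me replace $F(x_i)$ by $y$ in the limit, again via Lemma \ref{dphi}. The conclusion is
\[
d(y,F(y),F(w))=0 \quad \mbox{for all } w\in X .
\]
If $F(y)=y$ this is a fixed point and we are done. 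If $F(y)\ne y$, then since $\varphi$ is strictly reflexive (SR) the points $y$ and $F(y)$ are distinct, so Lemma \ref{lineunique} provides the unique line $L$ through them, characterized as the set of points colinear with $y$ and $F(y)$; the displayed identity then says exactly that $F(w)\in L$ for every $w$, i.e. $F(X)\subset L$.

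Finally I would reduce this last situation to the line case. Because $L\subset X$ we have $F(L)\subset F(X)\subset L$, so $L$ is carried into itself, and I would run the same dichotomy on $L$: if $F$ is constant on $L$ with value $p$, then $p=F(y)\in L$ and $F(p)=p$ is a fixed point; otherwise $F(L)$ has two distinct points and, by Lemma \ref{lineunique}, $L$ is the only line containing $F(L)$, a fixed line. Thus every branch terminates in a fixed point or a fixed line, which is the assertion. The delicate point throughout is the Cauchy branch: one must resist concluding $F(y)=y$ directly, and instead allow the limit to lie on a fixed line $L$ in the case that $F$ fails to fix it.
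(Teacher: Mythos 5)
Your proof is correct and follows essentially the same route as the paper: iterate, invoke the preceding corollary, handle the line alternative via $F(Y)\subset Y$ and Lemma \ref{lineunique}, and in the Cauchy alternative establish $d(y,F(y),F(w))=0$ for all $w$ so that $F(X)$ lands in the line through $y$ and $F(y)$. The only (harmless) difference is that you obtain that identity directly from Lemma \ref{dphi} and axiom (Z), where the paper routes the same estimate through $LIM(z,(x_i))$ and the continuity statement of Proposition \ref{dcontinuous}.
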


\begin{proof}
Pick $x_0\in X$ and define the sequence of iterates $(x_i)$ as above. By the previous
corollary, either $x_i\rightarrow y$ or else the set $Y$ of points $y$ with
$LIM (y,(x_i))$ is a line. 

Suppose we are in the second possibility but not the first; 
thus $(x_i)$ has more than one accumulation point. Suppose $y$ is
a point with property $LIM(y,(x_i))$, which means that $d(y,x_i,x_j)<\epsilon$ for $i,j\geq m_{\epsilon}$.
Hence if $i,j\geq m_{\epsilon}+1$ then 
\[
d(F(y),x_i,x_j)=d(F(y),F(x_{i-1}),F(x_{j-1}))<k\epsilon .
\]
This shows the property $LIM(F(y),(x_i))$. Thus, $F(Y)\subset Y$. 

Suppose $Y_1$ is a line with $F(Y)\subset Y_1$. If $F(Y)$ contains at least
two distinct points then there is at most one line containing $F(Y)$
by Lemma \ref{lineunique} and we obtain the second conclusion of
the theorem. Suppose $F(Y)=\{ y_0\}$ consists of a single point. 
Then, $y_0\in Y$ so $F(y_0)\in F(Y)$, which shows that $F(y_0)=y_0$; in this
case $F$ admits a fixed point. 

We may now assume that we are in the first case of the first paragraph: $x_i\rightarrow y$.
If $F(y)=y$ we have a fixed point, so assume\footnote{We need to consider this
case: as 
$F$ is not assumed to be surjective, it is not necessarily continuous for $\varphi$
so the convergence of the sequence of iterates towards $y$ doesn't directly
imply that $y$ is a fixed point.}
 $F(y)\neq y$. Let $Y$ be the
unique line containing $y$ and $F(y)$ by Lemma \ref{lineunique} which also says $Y$ is the
set of points colinear with $y$ and $F(y)$. 

We claim that $F(X)\subset Y$.
If $z\in X$ then $LIM(z,(x_i))$ by the last part of Proposition \ref{dcontinuous}.
For a given $\epsilon$ there is $m_{\epsilon}$ such that $i,j\geq m_{\epsilon}\Rightarrow
d(x_i,x_j,z)<\epsilon$. However, for $i$ fixed we have $x_j\rightarrow y$ by hypothesis,
and the continuity of $d$ (Proposition \ref{dcontinuous}) implies that $d(x_i,y,z)<\epsilon$. Apply $F$, giving 
\[
\forall i\geq m_{\epsilon},\;\;\; d(x_{i+1},F(y),F(z))<k\epsilon .
\]
Again using continuity of $d$, we have 
\[
d(x_{i+1},F(y),F(z))\rightarrow d(y,F(y),F(z))
\]
and the above then implies that $d(y,F(y),F(z))<k\epsilon $ for any $\epsilon >0$. 
Hence $d(y,F(y),F(z))=0$ which means $F(z)\in Y$. This proves that $F(X)\subset Y$
{\em a fortiori} $F(Y)\subset Y$. If $F(F(y))$ is distinct from $F(y)$ then
$Y$ is the unique line containing $F(Y)$, otherwise $F(y)$ is a fixed point of $F$.
This completes the proof of the theorem. 
\end{proof}

\begin{corollary}
Suppose $X\subset S^2$ is a closed subset. Define the function $d(x,y,z)$ by a determinant
as in Proposition \ref{determinantexample}. If $F:X\rightarrow X$ is a function such that $d(F(x),F(y),F(z))\leq
kd(x,y,z)$ for $0<k<1$ then either it has a fixed pair of antipodal points, or a fixed great
circle.
\end{corollary}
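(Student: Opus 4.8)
The plan is to deduce this from Theorem \ref{fixptline} by passing to the real projective plane. By Proposition \ref{determinantexample} and its corollary, the determinant function $d$ satisfies (Sym), (Tetr), (Z), (B) and (Trans) on any $X\subseteq S^2$, but it fails the nondegeneracy axiom (N): as observed in Remark \ref{antipode}, $\varphi(x,y)=0$ precisely when $y=\pm x$, so the equivalence relation of Section \ref{asymmtriangle} identifies antipodal points. I would therefore work with the image $\bar X=\pi(X)\subseteq{\mathbb{RP}}^2$ under the quotient $\pi:S^2\to{\mathbb{RP}}^2$. By the corollary following Lemma \ref{dphi}, $d$ descends to a function $\bar d$ on $\bar X$ retaining (Sym), (Tetr), (Z), (B) and (Trans), whose associated distance $\bar\varphi$ is strictly reflexive; so $\bar d$ satisfies (N) as soon as $\bar X$ is not contained in a single projective line. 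Compactness of $(\bar X,\bar\varphi)$ comes from compactness of $X$: being closed in the compact sphere, $X$ is compact in the usual topology, hence so is $\bar X$ in ${\mathbb{RP}}^2$; and since $\bar\varphi(\bar x,\bar y)=|\sin\angle(x,y)|$ metrizes the standard topology, every sequence in $\bar X$ has a $\bar\varphi$-convergent subsequence.

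The main obstacle is that $F$ need not respect the identification of antipodes, so one must verify that $F$ descends to a map $\bar F:\bar X\to\bar X$. If $x,-x\in X$ then $x,-x,z$ are colinear for every $z$, so contractivity gives $d(F(x),F(-x),F(z))\le k\,d(x,-x,z)=0$ for all $z\in X$; hence $F(x),F(-x)$ together with every $F(z)$ lie on a common great circle. This forces the dichotomy: either $F(-x)=\pm F(x)$, or $F(x)$ and $F(-x)$ are independent and $F(X)$ lies entirely on the great circle they span. I would split on whether $F(X)$ is contained in some single great circle $G$.

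In the principal case, where $F(X)$ is not contained in any great circle, the dichotomy forces $F(-x)=\pm F(x)$ for every antipodal pair in $X$, so $\bar F(\bar x):=\overline{F(x)}$ is well defined; moreover $\bar X$ is not contained in a projective line (otherwise $F(X)\subseteq X$ would lie on a great circle), so $\bar d$ satisfies (N), and $\bar d(\bar F\bar x,\bar F\bar y,\bar F\bar z)\le k\,\bar d(\bar x,\bar y,\bar z)$. Theorem \ref{fixptline} then yields either a fixed point $\bar p$ of $\bar F$, which lifts to $p\in X$ with $F(p)=\pm p$, that is a fixed pair of antipodal points, or a fixed line $\bar Y$, whose lift is $X\cap G$ for a great circle $G$ preserved by $F$, that is a fixed great circle. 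In the remaining case $F(X)\subseteq G$: if $F(X)$ contains two non-antipodal points, these determine a unique great circle, necessarily $G$, and $F(X\cap G)\subseteq F(X)\subseteq G$ exhibits $G$ as a fixed great circle; if instead $F(X)\subseteq\{p,-p\}$, then choosing any $q\in F(X)\subseteq X$ gives $F(q)\in\{p,-p\}=\{q,-q\}$, a fixed pair of antipodal points. This exhausts the cases and gives the stated conclusion.
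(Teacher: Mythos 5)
Your argument is correct and follows the same overall strategy as the paper's proof---quotient to $\rr \pp ^2$ via Remark \ref{antipode} and invoke Theorem \ref{fixptline}---but it supplies a verification that the paper's two-sentence proof omits entirely: that $F$ descends to the quotient. This is a real issue, not a formality: the contraction hypothesis only yields $d(F(x),F(-x),F(z))=0$ for $z\in X$, which controls the associated distance of $F(x)$ and $F(-x)$ against points of $F(X)$ rather than of $X$, and indeed any map whose image lies in a single great circle satisfies the contraction condition vacuously while remaining free to separate antipodal points. Your dichotomy---either $F(-x)=\pm F(x)$ for every antipodal pair in $X$, so that $\bar F$ is well defined, or else $F(X)$ is contained in one great circle $G$, in which case the conclusion is extracted directly---isolates exactly this obstruction and disposes of it, which is a genuine improvement on the printed proof. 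Two minor points to tidy. (i) For a proper closed $X\subset S^2$ the distance $\varphi (x,y)=\sup _{z\in X}|\det (x,y,z)|$ equals $\| x\times y\| $ times the width of $X$ in the normal direction, so it is only comparable to $|\sin \angle (x,y)|$ (uniformly so when $X$ is not contained in a great circle, by compactness of the set of unit normals); sequential compactness of $(\bar X ,\bar \varphi )$ still follows, but the identity you state is exact only for $X=S^2$. (ii) In the branch $F(X)\subseteq G$, the paper's definition of ``fixed great circle'' requires $G$ to be the \emph{unique} great circle containing $F(X\cap G)$, and $F(X\cap G)$ could degenerate to a single point or an antipodal pair even when $F(X)$ spans $G$; in that degenerate case the point or pair is itself fixed, since it lies in $F(X)\subseteq X\cap G$, so the alternative of the corollary still holds, exactly as in the sub-case analysis inside the proof of Theorem \ref{fixptline}. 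Neither point affects the validity of your proof.
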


\begin{proof}
Recall that we should really be working with the
image of $X$ in the real projective space $\rr \pp ^2=S^2/\sim$ (Remark \ref{antipode}).
On this quotient, the previous theorem applies. Note that by a ``fixed great circle''
we mean a subset $Y\subset X$ which is the intersection of $X$ with a great circle,
and such that $Y$ is the intersection of $X$ with the unique great circle 
containing $F(Y)$. 
\end{proof}

\section{Examples of contractive mappings}
\label{examplemaps}

Here are some basic examples which show that the phenomenon pointed out
by Hsiao \cite{Hsiao} doesn't affect our notion of contractive mapping.
He showed that for a number of different contraction conditions for a mapping
$F$ on a $2$-metric space, the iterates $x_i$ defined by $x_{i+1}:= F(x_i)$ starting
with any $x_0$, are all colinear i.e. $d(x_i,x_j,x_k)=0$ for all $i,j,k$. 

\subsection{}
Consider first the standard $2$-metric given by the area of the spanned triangle
in ${\mathbb{R}}^n$, see Proposition \ref{standardexample}. Suppose $U$
is a convex region containing $0$, contained in a ball of diameter $\leq 1$. Choose $0<k<1$
and put
$G(x_1,\ldots , x_n):= (kx_1,\ldots , kx_n)$. Then clearly 
\[
d(Gx,Gy,Gz)\leq k^2d(x,y,z).
\]
Even though $G$ itself satisfies the property of colinearity of iterates,
suppose $M\in O(n)$ is a linear orthogonal transformation of ${\mathbb{R}}^n$
preserving $U$. Then $d(Mx,My,Mz)=d(x,y,z)$, so if we  put $F(x):= M(G(x))$ then
\[
d(Fx, Fy,Fz)\leq k^2d(x,y,z).
\]
Hence $F$ is a contractive mapping. If $M$ is some kind of rotation for example,
the iterates $F^i(x_0)$ will not all be colinear, so Hsiao's conclusion doesn't hold
in this case. Of course, in this example there is a unique fixed point $0\in U$ so
the phenomenon of fixed lines hasn't shown up yet. 

\subsection{}
Look now at the determinant $2$-metric on ${\mathbb{RP}}^2$ of Proposition 
\ref{determinantexample}, viewed for convenience
as a function on $S^2$. 
Fix $0<e<1$ and consider a subset $U\subset S^2$ defined by the condition 
\[
U= \{ (x_1,x_2,x_3)\subset S^2,\;\;\mbox{s.t.}\; \| (x_1,x_2,0\| \geq e \} .
\]
This is a tubular neighborhood of the equator $x_3=0$. Choose $0<k<1$ and put
\[
G(x_1,x_2,x_3):= \frac{ (x_1,x_2,kx_3)}{\| (x_1,x_2,kx_3)\| }= 
\frac{ \tilde{x}}{\| \tilde{x} \| },
\]
where $\tilde{x}:= (x_1,x_2,kx_3)$.
The mapping $G$ preserves $U$, in fact it is a contraction towards the equator. 
Note that $\| \tilde{x}\| \geq \| (x_1,x_2,0)\|\geq e$,
so 
\[
d(Gx,Gy,Gz) = \frac{kd(x,y,z)}{\|\tilde{x}\| \, \|\tilde{y}\|\, \|\tilde{z}\|}
\leq ke^{-3}d(x,y,z) .
\]
If $k<e^3$ then this mapping is contractive. Every point of the equator is
a fixed point, and indeed $G$ is interesting from the point of view of studying
uniqueness, since it has the equator (containing all of the fixed points), as well as all of the longitudes (each containing a single fixed point), as fixed lines.

To get an example where there are no fixed points but a fixed line,
let $M\in O(3)$ be any orthogonal linear transformation of 
${\mathbb{R}}^3$, which preserves $S^2$. Suppose that $M$ preserves $U$; this will be the
case for example if $M$ is a rotation preserving the equator. Then $M$ preserves the
$2$-metric $d$, so $F(x):= M(G(x))$ is a contractive mapping $U\rightarrow U$. 
If $M$ is a nontrivial rotation, then it $F$ has no fixed points, but the equator is a fixed line. It is easy to see that the iterates $F^i(x_0)$ are not colinear in general. 
Thus Hsiao's remark \cite{Hsiao} doesn't apply to our notion of contractive mapping. 

\subsection{}
Come back to the standard area $2$-metric on ${\mathbb{R}}^n$. 
It should be pointed out that our contractivity condition automatically
implies that $F$ preserves the relation of colinearity, hence it preserves
lines. So, for example, if $U$ is a compact region in ${\mathbb{R}}^n$ of diameter
$\leq 1$, and if $U$ has dense and connected interior, then any contractive
mapping $F:U\rightarrow U$ in our sense must be the restriction of a projective
transformation of ${\mathbb{RP}}^n$, that is given by an element of $PGL(n+1,{\mathbb{R}})$. 
In this sense the Euclidean $2$-metric
itself has a strong rigidity property. 

On the other hand, we can easily consider examples
which don't contain any lines with more than two points. 
This is the case if $U$ is contained in the boundary of a strictly convex region. 
Furthermore, that reduces the possibility of
a fixed line in Theorem \ref{fixptline} to the case of two points
interchanged, whence:

\begin{corollary}
Suppose $U$ is a compact subset of the boundary of a strictly convex region of
diameter $\leq 1$ in ${\mathbb{R}}^n$, and let $d$ be the restriction of the
standard area $2$-metric (Proposition \ref{standardexample}). If
$F:U\rightarrow U$ is any mapping such that $d(F(x),F(y),F(z))\leq kd(x,y,z)$ for
a constant $0<k<1$, then either $F$ has  a fixed point, or else it interchanges
two points in a fixed pair.
\end{corollary}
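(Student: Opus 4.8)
The plan is to reduce the statement directly to Theorem \ref{fixptline} and then use strict convexity to force any fixed line to be a two-point set.

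First I would verify the hypotheses of Theorem \ref{fixptline}. The restriction of the standard area function to $U$ satisfies (Sym), (Tetr), (Z), (B), (Trans) by Proposition \ref{standardexample} applied to the ambient ball of diameter $\leq 1$. For (N), hence strict reflexivity of $\varphi$, one uses that no three distinct points of $U$ can be Euclidean-colinear because $U$ lies on the boundary of a strictly convex body; so as soon as $U$ has at least three points, any pair $a\neq b$ admits a third point $c$ off the line through them with $d(a,b,c)>0$. The degenerate cases $\#U\leq 2$ need no machinery at all, since a self-map of a set with at most two elements either fixes a point or transposes the two. Finally $(U,\varphi)$ is compact: $U$ is compact for the Euclidean topology, and the estimate $\varphi(x,y)=\sup_z \alpha(x,y,z)\leq \tfrac12\|x-y\|$ shows that a Euclidean-convergent sequence is $\varphi$-convergent, so every sequence in $U$ has a $\varphi$-convergent subsequence.

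Next I would apply Theorem \ref{fixptline}. Either $F$ has a fixed point, in which case we are done, or there is a line $Y$ with $F(Y)\subset Y$ that is the \emph{unique} line containing $F(Y)$. By Lemma \ref{lineunique} uniqueness of the line through $F(Y)$ already forces $F(Y)$ to contain two distinct points, so in this surviving case $\#F(Y)\geq 2$; the alternative in which $F(Y)$ collapses to a single point has been absorbed into the fixed-point branch of that theorem.

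The geometric heart is that for the Euclidean area $2$-metric, $d(x,y,z)=0$ holds exactly when $x,y,z$ are colinear in the ordinary sense. Since a straight line meets the boundary of a strictly convex body in at most two points, any colinear subset of $U$ has at most two elements, and therefore every line in the sense of Definition \ref{def-colinear} is a two-point set $Y=\{a,b\}$ with $a\neq b$. Combining $F(Y)\subset Y$ with $\#F(Y)\geq 2$ yields $F(Y)=Y$, so $F$ restricts to a bijection of $\{a,b\}$: it is either the identity, giving two fixed points, or the transposition $a\leftrightarrow b$, which is exactly the assertion that $F$ interchanges two points in a fixed pair. The only real subtlety is the bookkeeping of the degenerate configurations above; once (N) is in place and the single-point branch is correctly attributed to the fixed-point case, the remaining step is the trivial combinatorics of a self-map of a two-element set, and no further estimates are required.
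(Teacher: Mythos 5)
Your proof is correct and takes essentially the same route as the paper: the paper's entire proof is the single observation that lines in the boundary of a strictly convex region automatically consist of only two points, after which the fixed-line alternative of Theorem \ref{fixptline} collapses to a fixed or interchanged pair. Your write-up adds the routine verification of the hypotheses of that theorem (the axioms via Proposition \ref{standardexample}, compactness of $(U,\varphi)$ from $\varphi(x,y)\leq\tfrac12\|x-y\|$, and the degenerate cases for (N)), which the paper leaves implicit.
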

\begin{proof}
In the boundary of a strictly convex region, the ``lines'' are automatically subsets
consisting of only two points. 
\end{proof}

\subsection{}
For $(U,d)$ as in the previous corollary, for example a region on the sphere $S^2$ with the
restriction of the area $2$-metric from ${\mathbb{R}}^3$ as in Lemma \ref{s2bounds}, 
there exist many contractive mappings $F$. It requires a little bit of calculation to
show this. 

Suppose $V'\subset V \subset {\mathbb{R}}^2$ are open sets. For a $C^2$ mapping
$F:V'\rightarrow V$, let $J(F,x)$ denote the Jacobian matrix of $F$ at a point $x$,
viewed as an actual $2\times 2$ matrix 
using the standard frame for the tangent bundle of ${\mathbb{R}}^2$.
Let $dJ(F)$ denote the derivative of this matrix, i.e. the Hessian matrix of $F$.

\begin{proposition}
\label{s2maps}
Suppose $d$ is a $2$-metric on $V$ satisfying a convexity
bound of the form \eqref{convex}
$$
\frac{1}{C}\left( \alpha _2 (x,y,z) + \rho (x,y,z) \right) 
\leq d(x,y,z) \leq 
C\left( \alpha _2 (x,y,z) + \rho (x,y,z) \right) 
$$
for a constant $C>1$.
Such $d$ exists by Lemma \ref{s2bounds}. Given $C_A>0$, there is
a constant $C'>0$
such that the following holds. 

Suppose $A$ is a fixed $2\times 2$ matrix
with ${\rm det}(A)\neq 0$ and $\| A\| \leq C_A$.
Then there is a constant $c'>0$ depending on $A$, 
such that if $F:V'\rightarrow V$ is a $C^2$ mapping
with $\| J(F,x)-A\| \leq c'$ and $\| dJ(F)\| \leq c'$ over $V'$, then 
\[
d(F(x),F(y),F(z))\leq C'|{\rm det}(A)|d(x,y,z)
\]
for any $x,y,z\in V'$. 
\end{proposition}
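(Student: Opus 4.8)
The plan is to push the whole estimate through the convexity bound \eqref{convex}, which lets me replace the abstract $2$-metric $d$ by the two explicit Euclidean quantities $\alpha _2$ and $\rho$. Applying the upper half of \eqref{convex} to the image triple and the lower half to $(x,y,z)$, it is enough to find a constant $\tilde C$, depending only on $C$ and $C_A$, with
\[
\alpha _2(F(x),F(y),F(z)) + \rho (F(x),F(y),F(z)) \leq \tilde C\,|\det (A)|\bigl(\alpha _2(x,y,z)+\rho (x,y,z)\bigr),
\]
and then to set $C'=C^2\tilde C$. From here on $x,y,z\in V'$ are fixed and I work only with these Euclidean areas and length-products.

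First I would record the effect of $F$ on edge vectors. Writing $F(y)-F(x)=\int _0^1 J(F,x+t(y-x))(y-x)\,dt$ and using $\|J(F,\cdot)-A\|\leq c'$ gives $\|F(y)-F(x)-A(y-x)\|\leq c'\|y-x\|$, hence
\[
\|F(y)-F(x)\|\leq (\|A\|+c')\|y-x\|\leq (C_A+c')\|y-x\|.
\]
Because $\det (A)\neq 0$ I may also choose $c'<\tfrac12\sigma _{\min}(A)$ (this is where $c'$ is allowed to depend on $A$) to get a matching lower bound. The upper bound immediately yields the length estimate $\rho (F(x),F(y),F(z))\leq (C_A+c')^3\rho (x,y,z)$.

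For the area I would use the second hypothesis $\|dJ(F)\|\leq c'$, which says that $J(F,\cdot)$ is nearly constant on $V'$. Taylor's formula with remainder gives $F(y)-F(x)=J(F,x)(y-x)+r_{xy}$ with $\|r_{xy}\|\leq \tfrac12 c'\|y-x\|^2$, so that
\[
2\alpha _2(F(x),F(y),F(z)) = \bigl|\det [\,J(F,x)(y-x)+r_{xy},\,J(F,x)(z-x)+r_{xz}\,]\bigr|.
\]
The leading term is $|\det J(F,x)|\cdot 2\alpha _2(x,y,z)$, and since $\|J(F,x)-A\|\leq c'$ one has $|\det J(F,x)|\leq |\det (A)|+ c'(C_A+c')$, so this leading term is $\bigl(|\det (A)|+O(c'C_A)\bigr)2\alpha _2(x,y,z)$. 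The remaining terms each contain at least one remainder factor $r_{xy}$ or $r_{xz}$ and are therefore bounded by a constant multiple of $c'(C_A+c')$ times a product of two edge lengths $\|y-x\|\,\|z-x\|$.

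The main obstacle is the final absorption step: I must show that the factor $(C_A+c')^3$ in front of $\rho$ and the products of two edge lengths coming from the area remainders are all dominated by $\tilde C|\det (A)|(\alpha _2+\rho)$. This is delicate, and is exactly the point where the geometry of the sphere patch is used through the $\rho$-term of \eqref{convex}. A purely termwise estimate produces $\|A\|^3$ rather than $|\det (A)|$ in front of $\rho$, and a product $\|y-x\|\,\|z-x\|$ is not in general controlled by $\alpha _2+\rho$ for nearly degenerate triangles; here I would exploit the diameter bound $\mathrm{diam}(V')\leq 1$ to trade one excess length factor for a factor $\leq 1$, together with the identity $\rho =4R\,\alpha _2$ relating the length-product to the area through the circumradius $R$, in order to rewrite products of two edge lengths in terms of the combination $\alpha _2+\rho$. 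Finally $c'$ is taken small relative to $|\det (A)|$ and $\sigma _{\min}(A)$ so that every term carrying a factor $c'$ is swallowed by $\tilde C|\det (A)|(\alpha _2+\rho)$. Carrying out this absorption uniformly in the configuration $(x,y,z)$ is the crux of the argument; once it is done the displayed inequality, and with it the proposition, follows.
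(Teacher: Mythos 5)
Your strategy coincides with the paper's own: reduce via the two\-/sided bound \eqref{convex} to an estimate on $\alpha _2+\rho$, control $\rho (F(x),F(y),F(z))$ by edge-length estimates, and expand the area of the image triangle around the linearization of $F$, using the Hessian bound $\| dJ(F)\| \leq c'$ to control the error (the paper uses a mean-value form $F(y)-F(x)=uJ(F,y')(y-x)$ where you use Taylor's formula with remainder; this difference is immaterial). Up to that point your write-up is sound.

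The problem is the step you yourself label ``the crux'': the absorption of the $\rho$-term and the error terms into $\tilde C\, |{\rm det}(A)|\,(\alpha _2+\rho )$. You do not carry it out, and the tools you propose (the diameter bound and $\rho =4R\,\alpha _2$) cannot carry it out, because the required inequality is false. Concretely, take $F=A={\rm diag}(1,\epsilon )$ (composed with a translation if needed so that $F(V')\subset V$) and $x,y,z$ distinct and collinear along the first coordinate axis. This $F$ is admissible for every $c'>0$, it preserves the mutual distances of these three points, so $\rho (F(x),F(y),F(z))=\rho (x,y,z)>0$ while $\alpha _2$ vanishes on both sides; hence $d(F(x),F(y),F(z))\geq \frac{1}{C}\rho \geq \frac{1}{C^2}d(x,y,z)$, which forces $C'\geq 1/(C^2\epsilon )$. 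Since the proposition requires $C'$ to depend only on $C$ and $C_A$, uniformly over all $A$ with ${\rm det}(A)\neq 0$ and $\| A\| \leq C_A$, no such $C'$ exists: the factor picked up by the $\rho$-part of $d$ is governed by $\| A\| ^3$, not by $|{\rm det}(A)|$, exactly as you feared. You have therefore located a genuine defect rather than a technicality; the paper's own proof elides it, passing from $\alpha _2(F\cdot )\leq C_4|{\rm det}(A)|\alpha _2+C_1C_2C_3\rho$ and $\rho (F\cdot )\leq C_5\rho$ to the conclusion ``using the convexity estimate,'' which would need the coefficients of $\rho$ to be $O(|{\rm det}(A)|)$ --- they are not (and its intermediate bound $\| y-x\| ^2\| z-x\| +\ldots \leq C_2\rho (x,y,z)$ also fails as $y\rightarrow z$). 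The statement survives only if the constant is allowed to depend on $A$, or if $|{\rm det}(A)|$ is replaced by something like $\max (|{\rm det}(A)|,\| A\| ^3)$; either repair still yields the contractive examples constructed after the proposition, where one simply takes $A$ itself small.
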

\begin{proof}
In what follows the constants $C_i$ and $C'$ will depend only on $C$ and 
the bound $C_A$ for $\| A\| $,
assuming $c'$ to be chosen small enough depending on $A$. 
By choosing $c'$ small enough we may assume that $F$ is locally a 
diffeomorphism, as $J(F,x)$ is invertible, being close to the invertible matrix $A$. 
By Rolle's theorem 
there is a point $y'$ on the segment joining $x$ to $y$, and a positive real
number $u$, such that 
\[
F(y)-F(x) = u J(F,y')(y-x).
\]
Furthermore $u<C_1$, if $c'$ is small enough. We thank N. Mestrano for 
this argument.

Similarly there is a point $z'$ on the segment joining $x$ to $z$, 
and a positive real number $v<C_1$ such that 
\[
F(z)-F(x) = v J(F,z')(z-x).
\]
Put $S:= J (F,y')-J(F,x)$ and $T:= J (F,z')-J(F,x)$.
Now the bound $\| dJ(F)\| \leq c'$ implies that 
\[
\| S\|  \leq 4c' \| y-x\|  ,
\;\;\; 
\|  T\|  \leq 4c' \| z-x\|  .
\]
We have 
\[
\alpha  _2(F(x),F(y),F(z)) = \left| (F(y)-F(x))\wedge (F(z)-F(x))\right| 
\]
\[
= \left|(uJ(F,y')(y-x))\wedge (vJ(F,z')(z-x))\right|
\]
\[
= uv\left| (J(F,x)+S)(y-x)\wedge (J(F,x)+T)(z-x)\right| 
\]
\[
\leq uv  |{\rm det}J(F,x)| \alpha _2(x,y,z) 
\]
\[
\mbox{   }
+ uv[(\| S\| +\| T\|  ) \| J(F,x)\|  + \| S\| \| T\| ]\cdot 
\| y-x\| \cdot \| z-x\|  . 
\]
Note that 
\[
\| y-x\| ^2\| z-x\|  +\| y-x\| \| z-x\| ^2 + \| y-x\| ^2\| z-x\| ^2\leq C_2\rho (x,y,z),
\] 
and $\| J(F,x)\|  \leq C_3$. 
By chosing $c'$ small enough depending on $A$, we may assume that 
$|{\rm det}J(F,x)|\leq C_4 |{\rm det}A|$. So again possibly by reducing $c'$, 
we get
\[
\alpha  _2(F(x),F(y),F(z)) \leq C_4|{\rm det}(A)| \alpha _2(x,y,z) +
C_1C_2C_3\rho (x,y,z).
\]
It is also clear that $\rho (F(x),F(y),F(z))\leq C_5\rho (x,y,z)$.
Using the convexity estimate in the hypothesis of the proposition,
we get
\[
d(F(x),F(y),F(z))\leq C'|{\rm det}(A)|d(x,y,z)
\]
for a constant $C'$ which depends on $C$ and $C_A$ but not on $A$ itself.
\end{proof}

Suppose $V$ is a disc centered at the origin in ${\mathbb{R}}^2$ with a $2$-metric $d$
satisfying the convexity estimate of the form \eqref{convex}. For example
$V$ could be the projection
of a patch in the Euclidean $S^2$ as in Lemma \ref{s2bounds}.
Fix a bound $C_A=2$ for example. We get a constant $C'$ from the previous 
proposition. Choose then a matrix $A$ such that 
$C'|{\rm det}(A)| < 1$ and $A\cdot V\subset V$. Then there is $c'$
given by the previous proposition. There exist plenty of $C^2$ mappings 
$F:V\rightarrow V$ with $\| J(F,x)-A\| \leq c'$ and $\| dJ(F)\| \leq c'$.
By the conclusion of the proposition, these are contractive. 

This example shows in a strong sense that there are no colinearity constraints such as
\cite{Hsiao} for mappings which are contractive in our sense.

As a $2$-metric moves from a convex one towards the flat Euclidean one
(for example for patches of the same size but on spheres with bigger and bigger radii)
we have some kind of a deformation from a nonrigid situation to a rigid one.
This provides a model for investigating this general 
phenomenon in geometry.

The question of understanding the geometry of contractive mappings, or
even mappings $F$ with $d(F(x),F(y),F(z))\leq Kd(x,y,z)$ for any constant $K$,
seems to be an interesting geometric problem. 
Following the extensive literature in this subject,
more complicated situations involving several compatible mappings and additional
terms in the contractivity condition may also be envisioned.


\end{document}